\newtheorem{theorem}{Theorem}
\newtheorem{corollary}[theorem]{Corollary}
\newtheorem{lemma}[theorem]{Lemma}
\newtheorem{proposition}[theorem]{Proposition}
\newenvironment{proof}[1][Proof]{\noindent \textbf{#1.} }{\  \rule{0.5em}{0.5em}}
\begin{document}

\begin{center}
{\Large Super-congruences involving trininomial coefficients}

\  \ 

\textbf{Laid Elkhiri}

EDTNLHM Laboratory

Department of Mathematics, ENS Old Kouba, Algiers, Algeria

elkhirialjebre@hotmail.fr

and

\textbf{Miloud Mihoubi}

RECITS Laboratory

Faculty of Mathematics, USTHB, Algiers, Algeria

mmihoubi@usthb.dz \ or \ miloudmihoubi@gmail.com

\ 
\end{center}

\noindent \textbf{Abstract.} The aim of this work is to establish
congruences $\left( \func{mod}p^{2}\right) $ involving the trinomial
coefficients $\binom{np-1}{p-1}_{2}$ and $\binom{np-1}{\left( p-1\right) /2}%
_{2}$ arising from the expansion of the powers of the polynomial $1+x+x^{2}.$
In main results we extend some known congruences involving the binomial
coefficients $\binom{np-1}{p-1}$ and $\binom{np-1}{\left( p-1\right) /2}$
and establish congruences link binomial coefficients and harmonic numbers.

\noindent \textbf{Keywords.} Binomial coefficients, trinomial coefficients,
harmonic numbers, congruences.

\noindent \textbf{MSC: }11B65, 11A07, 05A10.

\section{Introduction and main results}

Great mathematicians studied in the 19-st century congruences of the forms $%
\binom{2p-1}{p-1}$ and $\binom{p-1}{\left( p-1\right) /2},$ in 1819, Babbage 
\cite{BAB} showed, for any prime number $p\geq 3,$ the congruence%
\begin{equation*}
\binom{2p-1}{p-1}\equiv 1\text{ }\left( \func{mod}p^{2}\right) .
\end{equation*}%
In 1862, Wolstenholme \cite{WOL} proved, for any prime number $p\geq 5,$
that the above congruence can be extended to%
\begin{equation*}
\binom{2p-1}{p-1}\equiv 1\text{ }\left( \func{mod}p^{3}\right)
\end{equation*}%
and in 1900, Glaisher \cite{GLA 1} proved, for any prime number $p\geq 5,$
that the above congruence can also be extended to%
\begin{equation*}
\binom{np-1}{p-1}\equiv 1\text{ }\left( \func{mod}p^{3}\right) ,\text{ }%
n\geq 1.
\end{equation*}%
In 1895, Morley \cite{MORL} proved, for any prime number $p\geq 5,$ that%
\begin{equation*}
\binom{p-1}{\left( p-1\right) /2}\equiv \left( -1\right) ^{\left( p-1\right)
/2}4^{p-1}=\left( -1\right) ^{\left( p-1\right) /2}\left( 1+pq_{2}\right)
^{2}\text{ }\left( \func{mod}p^{3}\right) ,
\end{equation*}%
where $q_{a}$ is the Fermat quotient defined for a given prime number $p$ by 
\begin{equation*}
q_{a}=q_{a}\left( p\right) :=\frac{a^{p-1}-1}{p},\  \  \ a\in \mathbb{Z-}p%
\mathbb{Z},
\end{equation*}%
and $\mathbb{Z}$ denotes the set of the integer numbers.\newline
Also, in 1953, Carlitz \cite{CAR1,CAR2} improved, for any prime number $%
p\geq 5,$ Morley's congruence to%
\begin{equation*}
\left( -1\right) ^{\frac{p-1}{2}}\binom{p-1}{\left( p-1\right) /2}\equiv
4^{p-1}+\frac{p^{3}}{12}\text{ }\left( \func{mod}p^{4}\right) .
\end{equation*}%
Many great mathematicians have been interested to generalize the congruence
of Wostenhlom and Morly, such the works of Zhao \cite{ZHA}, McIntosh \cite%
{MCI}, Me\v{s}trovi\'{c} \cite{ROM 2}, Bencherif et al. \cite{BEN} and Sun 
\cite{SUN}. Recently, Sun \cite{SUN1} gave some properties and congruences
involving the coefficients $\dbinom{n}{n}_{2}$ defined by%
\begin{equation*}
\left( 1+x+x^{2}\right) ^{n}=\overset{2n}{\underset{k=0}{\sum }}\binom{n}{k}%
_{2}x^{k}.
\end{equation*}%
see also Cao \& Pan \cite{CAO1} and Cao \& Sun \cite{CAO}.\newline
The idea of this work is inspired from the congruences given by Wolstenholme
and Morly. We study congruences modulo $p^{2}$ for the trinomial
coefficients $\binom{np-1}{p-1}_{2}$ and $\binom{np-1}{\left( p-1\right) /2}%
_{2}.$ We prove congruences involving trinomial coefficients, binomial
coefficients and harmonic numbers. Our main results are given as follows.

\begin{theorem}
\label{TH}Let $p\geq 5$ be a prime number and $n$ be a positive integer. We
have 
\begin{equation}
\binom{np-1}{p-1}_{2}\equiv \left \{ 
\begin{array}{c}
\text{ \ }1+npq_{3}\text{ }\left( \func{mod}p^{2}\right) \text{\  \  \ if \ }%
p\equiv 1\text{ }\left( \func{mod}3\right) ,\medskip \\ 
-1-npq_{3}\text{ }\left( \func{mod}p^{2}\right) \text{\  \  \ if \ }p\equiv 2%
\text{ }\left( \func{mod}3\right) .%
\end{array}%
\right.  \label{2}
\end{equation}%
and%
\begin{equation}
\binom{np-1}{\frac{p-1}{2}}_{2}\equiv \left \{ 
\begin{array}{c}
1+np\left( 2q_{2}+\frac{1}{2}q_{3}\right) \text{ }\left( \func{mod}%
p^{2}\right) \text{\  \  \ if \ }p\equiv 1\text{ }\left( \func{mod}6\right)
,\medskip \\ 
\text{ \  \  \  \ }-\frac{1}{2}npq_{3}\text{ }\left( \func{mod}p^{2}\right) 
\text{ \  \  \  \  \  \  \  \  \ if \ }p\equiv 5\text{ }\left( \func{mod}6\right) .%
\end{array}%
\right.  \label{4}
\end{equation}
\end{theorem}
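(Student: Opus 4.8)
The plan is to reduce the extraction of a single coefficient $\binom{np-1}{k}_2$ (for $k=p-1$ and $k=(p-1)/2$) to a short, explicitly computable sum, and then to evaluate that sum modulo $p^{2}$ term by term. The starting point is the factorization $1+x+x^{2}=(1-x^{3})/(1-x)$, which gives
\[
(1+x+x^{2})^{np-1}=(1-x^{3})^{np-1}(1-x)^{-(np-1)}.
\]
Multiplying the two binomial series and reading off the coefficient of $x^{k}$ yields the finite identity
\[
\binom{np-1}{k}_{2}=\sum_{j\ge 0}(-1)^{j}\binom{np-1}{j}\binom{np-2+k-3j}{k-3j},
\]
where $j$ runs up to $\lfloor k/3\rfloor$. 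This identity is what I would establish first; everything afterward is an analysis of its summands modulo $p^{2}$.

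The second step is a factor-by-factor reduction. For $0\le j<p$ one has $(-1)^{j}\binom{np-1}{j}\equiv 1-np\,H_{j}\pmod{p^{2}}$, where $H_{j}=\sum_{t=1}^{j}1/t$, obtained by writing $\binom{np-1}{j}=\prod_{t=1}^{j}(np-t)/t$ and discarding the $p^{2}$ terms. Setting $r=k-3j$, the key observation is that $\binom{np-2+r}{r}=\prod_{s=1}^{r}(np-2+s)/s$ contains exactly one factor divisible by $p$ (namely $s=2$) as soon as $r\ge 2$; hence $\binom{np-2+r}{r}\equiv -np/(r(r-1))\pmod{p^{2}}$ for $r\ge 2$, while $r=0$ gives $1$ and $r=1$ gives $np-1$. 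Because the $r\ge 2$ contributions are already $\equiv 0\pmod p$, they need only be multiplied by $(-1)^{j}\binom{np-1}{j}\equiv 1\pmod p$, so each such term contributes simply $-np/(r(r-1))$.

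The third step is the casework, which is forced by the residue of $k$ modulo $3$: the admissible values $r=k-3j$ are exactly those $\ge 0$ in a fixed class mod $3$, and the smallest one (either $0$, $1$, or $2$) is the only ``large'' term and supplies the leading $\pm 1$. For $k=p-1$ this splits according to $p\equiv 1$ or $2\pmod 3$; for $k=(p-1)/2$ it splits according to $p\equiv 1$ or $5\pmod 6$ (since then the minimal index is $0$ or $2$ respectively), matching the statement exactly. After collecting terms, each congruence takes the shape $\pm 1+np\cdot(\text{harmonic combination})\pmod{p^{2}}$, where the harmonic combination is built from $H_{\lfloor p/3\rfloor}$, $H_{\lfloor p/6\rfloor}$ and residue-restricted sums $\sum 1/(r(r-1))$.

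The final step, and the main obstacle, is to identify these harmonic combinations with the Fermat quotients $q_{2},q_{3}$. I would first telescope $1/(r(r-1))=1/(r-1)-1/r$ and split the resulting sums by residue classes mod $3$; the self-symmetry $m\mapsto p-m$ kills the class that is stable under it (for instance $\sum_{m\equiv 2\,(3)}1/m\equiv 0\pmod p$ when $p\equiv 1\pmod 3$), which is what produces the clean coefficient of $q_{3}$. What remains is reduced to $q_{2},q_{3}$ by the classical Lehmer congruences $H_{\lfloor p/3\rfloor}\equiv -\tfrac32 q_{3}$, $H_{\lfloor p/2\rfloor}\equiv -2q_{2}$ and $H_{\lfloor p/6\rfloor}\equiv -2q_{2}-\tfrac32 q_{3}\pmod p$, which I would record as preliminary lemmas. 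The delicate bookkeeping is concentrated in the second congruence, where both quotients appear and several of these identities must be combined; verifying that the stray constant from the minimal-index term cancels correctly is where I expect the computation to require the most care.
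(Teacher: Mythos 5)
Your proposal is correct, and it takes a genuinely different route from the paper's proof. You factor $1+x+x^{2}=(1-x^{3})/(1-x)$ and expand $(1-x^{3})^{np-1}(1-x)^{-(np-1)}$ by the binomial and negative-binomial series, so that
\[
\binom{np-1}{k}_{2}=\sum_{j=0}^{\lfloor k/3\rfloor}(-1)^{j}\binom{np-1}{j}\binom{np-2+k-3j}{k-3j},
\]
and your key reduction $\binom{np-2+r}{r}\equiv -np/\bigl(r(r-1)\bigr)\pmod{p^{2}}$ for $2\le r\le p-1$ is valid: among the factors $np-2+s$, $1\le s\le r$, only $s=2$ is divisible by $p$, precisely because $r\le p-1$, which holds here since $k\le p-1$. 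I checked all four cases of your plan to the end: after telescoping $1/(r(r-1))=1/(r-1)-1/r$, the needed inputs are exactly $H_{[p/3]}\equiv -\tfrac{3}{2}q_{3}$, $H_{[p/6]}\equiv -2q_{2}-\tfrac{3}{2}q_{3}$ and the residue-class sums (\ref{C1b}), (\ref{C2b}), (\ref{H0})--(\ref{H2}) of the paper, and the stray constants (for instance the $+np$ coming from the $r=1$ term when $p\equiv 2\pmod 3$) do cancel, yielding (\ref{2}) and (\ref{4}). The paper proceeds differently: it factors $1+x+x^{2}=(x+e^{i\pi/3})(x+e^{-i\pi/3})$, obtains the cosine-weighted convolution (\ref{ID5}), and from it Proposition \ref{P}, i.e.\ closed congruences for $\binom{np-1}{3k}_{2}$, $\binom{np-1}{3k+1}_{2}$, $\binom{np-1}{3k+2}_{2}$ valid for \emph{all} admissible $k$, of which Theorem \ref{TH} is the specialization $3k=p-1$, $3k+1=p-1$, $3k=(p-1)/2$, $3k+2=(p-1)/2$. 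Your route is more elementary and shorter for the theorem itself: no complex roots, a single sum rather than a double one, and the telescoping keeps the harmonic bookkeeping light. What the paper's route buys is reusable intermediate machinery: Proposition \ref{P} is also the engine behind Proposition \ref{PP} and Corollary \ref{CC}, which an endpoint-only computation like yours does not yield. Note that both arguments ultimately rest on the same classical lemmas for $H_{[p/2]}$, $H_{[p/3]}$, $H_{[p/6]}$ and the mod-$3$ residue-class harmonic sums, so those preliminary lemmas are unavoidable either way.
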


\begin{theorem}
\label{C} For every prime number $p\geq 5$ we have%
\begin{equation}
\dsum \limits_{k=0}^{\frac{p-1}{2}}\binom{2k}{k}H_{k}\equiv \left \{ 
\begin{array}{l}
-q_{3}\text{ }\left( \func{mod}p\right) \text{ if }p\equiv 1\text{ }\left( 
\func{mod}3\right) ,\medskip \\ 
\text{ \ }q_{3}\text{ }\left( \func{mod}p\right) \text{ if }p\equiv 2\text{ }%
\left( \func{mod}3\right) ,%
\end{array}%
\right.  \label{6}
\end{equation}%
and%
\begin{equation}
\dsum \limits_{k=1}^{\left[ \frac{p-1}{4}\right] }\frac{1}{4^{k}}\dbinom{4k}{%
2k}\left( 2H_{2k}-H_{k}\right) \equiv \left \{ 
\begin{array}{l}
-\left( -1\right) ^{\left( p-1\right) /2}\frac{q_{3}}{2}\text{ }\left( \func{%
mod}p\right) \  \text{if }p\equiv 1\text{ }\left( \func{mod}6\right) ,\medskip
\\ 
\text{ \ }\left( -1\right) ^{\left( p-1\right) /2}\frac{q_{3}}{2}\text{ }%
\left( \func{mod}p\right) \  \text{if }p\equiv 5\text{ }\left( \func{mod}%
6\right) ,%
\end{array}%
\right.  \label{7}
\end{equation}%
where $H_{n}$ to be the $n$-th harmonic number defined by%
\begin{equation*}
H_{0}=0,\  \  \ H_{n}=1+\frac{1}{2}+\cdots +\frac{1}{n}.
\end{equation*}
\end{theorem}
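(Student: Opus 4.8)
The plan is to derive both congruences of Theorem~\ref{C} from Theorem~\ref{TH} by reading off the coefficient of $n$ in the $\pmod{p^2}$ expansions of the two trinomial coefficients. The engine is the factorisation $(1+x+x^2)^m=(1+x(1+x))^m$, which gives the binomial expansion $\binom{m}{r}_2=\sum_{j}\binom{m}{j}\binom{j}{r-j}$, combined with the elementary congruence
\begin{equation*}
\binom{np-1}{j}\equiv(-1)^{j}\bigl(1-npH_{j}\bigr)\pmod{p^{2}}\qquad(0\le j\le p-1),
\end{equation*}
obtained by expanding $\binom{np-1}{j}=j!^{-1}\prod_{i=1}^{j}(np-i)$ and discarding the $O(p^{2})$ terms. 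Specialising $r=p-1$ and $r=\frac{p-1}{2}$ turns each trinomial coefficient into an $n$-affine residue $A-npB\pmod{p^{2}}$ whose constant term $A$ and slope $B$ are independent of $n$; since the congruences of Theorem~\ref{TH} hold for every $n$, differencing consecutive values of $n$ isolates $B$ modulo $p$, while the $n$-free part isolates $A$ modulo $p^{2}$.

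For the first congruence I take $r=p-1$ and set $k=p-1-j$; the admissible range $\frac{p-1}{2}\le j\le p-1$ becomes $0\le k\le\frac{p-1}{2}$, so
\begin{equation*}
\binom{np-1}{p-1}_{2}\equiv\sum_{k=0}^{(p-1)/2}(-1)^{k}\bigl(1-npH_{p-1-k}\bigr)\binom{p-1-k}{k}\pmod{p^{2}}.
\end{equation*}
Two standard reductions modulo $p$ then apply: $(-1)^{k}\binom{p-1-k}{k}\equiv\binom{2k}{k}$ (reduce the numerator factors $p-(k+1),\dots,p-2k$) and $H_{p-1-k}\equiv H_{k}$ (from $H_{p-1}\equiv0$ and $\tfrac1{p-i}\equiv-\tfrac1i$). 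Hence the slope is $B\equiv\sum_{k=0}^{(p-1)/2}\binom{2k}{k}H_{k}\pmod p$. Matching the coefficient of $n$ against Theorem~\ref{TH}, namely $\binom{np-1}{p-1}_{2}\equiv\pm(1+npq_{3})$, gives $B\equiv\mp q_{3}\pmod p$, which is exactly \eqref{6}.

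For the second congruence I take $r=\frac{p-1}{2}$ and set $k=\frac{p-1}{2}-j$, so that $0\le k\le[\frac{p-1}{4}]$ and the expansion involves $\binom{(p-1)/2-k}{k}$ and $H_{(p-1)/2-k}$. Here I will use $(-1)^{k}\binom{(p-1)/2-k}{k}\equiv\frac{1}{4^{k}}\binom{4k}{2k}\pmod p$ (reduce the odd-number product $(2k+1)(2k+3)\cdots(4k-1)$ via double factorials) and the harmonic reduction $H_{(p-1)/2-k}\equiv(2H_{2k}-H_{k})-2q_{2}\pmod p$, which follows from the classical $H_{(p-1)/2}\equiv-2q_{2}\pmod p$ together with $\sum_{j=0}^{k-1}\tfrac1{(p-1)/2-j}\equiv-2\sum_{j=0}^{k-1}\tfrac1{2j+1}=-2\bigl(H_{2k}-\tfrac12H_{k}\bigr)$. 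Writing $S=\sum_{k\ge1}\frac1{4^{k}}\binom{4k}{2k}(2H_{2k}-H_{k})$ (the $k=0$ term vanishes because $2H_{0}-H_{0}=0$, which is why the sum in \eqref{7} starts at $k=1$) and $T=\sum_{k\ge0}\frac1{4^{k}}\binom{4k}{2k}$, I obtain slope $B'\equiv(-1)^{(p-1)/2}\bigl(S-2q_{2}T\bigr)\pmod p$ and constant term $A'\equiv(-1)^{(p-1)/2}T$.

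The one point requiring care is the stray $-2q_{2}T$ term, and it is precisely here that the constant term of the trinomial coefficient pays off. Theorem~\ref{TH} forces $A'\equiv1\pmod{p^{2}}$ when $p\equiv1\ (6)$ and $A'\equiv0\pmod{p^{2}}$ when $p\equiv5\ (6)$, i.e.\ $(-1)^{(p-1)/2}T\equiv1$ respectively $\equiv0\pmod p$. Matching the slope $B'$ against Theorem~\ref{TH} (which gives $-(2q_{2}+\tfrac12q_{3})$ respectively $\tfrac12q_{3}$) and solving for $S$, the contribution $2q_{2}T$ cancels the $2q_{2}$ coming from the slope, leaving $S\equiv-(-1)^{(p-1)/2}\tfrac{q_{3}}{2}$ respectively $S\equiv(-1)^{(p-1)/2}\tfrac{q_{3}}{2}\pmod p$, which is \eqref{7}. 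The main obstacle is therefore not any single estimate but the bookkeeping: carrying the $\pmod{p^{2}}$ expansion accurately, reducing the two exotic binomial coefficients, and exploiting the known constant term of the trinomial coefficient to absorb the $q_{2}$-contribution so that only $q_{3}$ survives.
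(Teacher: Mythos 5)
Your proposal is correct, and its skeleton is the same as the paper's: both expand the trinomial coefficient through the identity $\binom{m}{r}_2=\sum_j\binom{m}{j}\binom{j}{r-j}$ (the paper cites this as \eqref{R1} from Belbachir et al.; you derive it from $(1+x+x^2)^m=\bigl(1+x(1+x)\bigr)^m$), insert $\binom{np-1}{j}\equiv(-1)^j(1-npH_j)\pmod{p^2}$, perform the reductions $(-1)^k\binom{p-1-k}{k}\equiv\binom{2k}{k}$, $(-1)^k\binom{(p-1)/2-k}{k}\equiv 4^{-k}\binom{4k}{2k}$, $H_{p-1-k}\equiv H_k$ and $H_{(p-1)/2-k}\equiv 2H_{2k}-H_k-2q_2\pmod p$ (the latter two are the paper's congruences \eqref{CONG 0} and \eqref{CONG 1}), and finally confront the outcome with Theorem \ref{TH}. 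The genuine difference lies in how the $n$-free part is handled. The paper evaluates the alternating sums $\sum_k(-1)^k\binom{n-k}{k}$ exactly via Grinberg's identity \eqref{R2}, together with the sign observation $(-1)^{(p-1)/2+\left[(p-1)/6\right]}=1$, and then cancels constant terms against Theorem \ref{TH}. You never invoke \eqref{R2}: since Theorem \ref{TH} holds for every $n$ while your $A,B$ (resp. $A',B'$) do not depend on $n$, differencing in $n$ isolates the slope modulo $p$, and back-substitution pins down the constant term modulo $p^2$; this is precisely what you need to identify $T=\sum_k 4^{-k}\binom{4k}{2k}\pmod p$ and absorb the $-2q_2T$ cross term. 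I checked the sign bookkeeping in both cases ($p\equiv1$ and $p\equiv5\pmod 6$) and it is consistent, so there is no gap. What your route buys is independence from the external combinatorial identity of [DAR, Cor.\ 2.8] --- indeed it re-derives, as a byproduct, that $(-1)^{(p-1)/2}T\equiv 1$ or $0\pmod p$ according as $p\equiv 1$ or $5\pmod 6$ --- at the modest cost of exploiting the full $n$-dependence of Theorem \ref{TH} (two values of $n$) where the paper, knowing the constant terms exactly, could get away with a single instance.
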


\begin{proposition}
\label{PP}Let $p\geq 5$ be a prime number and $n$ be a positive integer.
Then 
\begin{equation}
\overset{p-1}{\underset{k=0}{\sum }}\binom{np-1}{k}_{2}\equiv \left \{ 
\begin{array}{l}
1+npq_{3}\text{ }\left( \func{mod}p^{2}\right) \text{ \  \  \ if }p\equiv 1%
\text{ }\left( \func{mod}3\right) ,\medskip \\ 
\text{ \  \  \  \ }0\text{ \  \  \  \ }\left( \func{mod}p^{2}\right) \text{ \  \  \
if }p\equiv 2\text{ }\left( \func{mod}3\right)%
\end{array}%
\right.  \label{9}
\end{equation}%
and%
\begin{equation}
\overset{\frac{p-1}{2}}{\underset{k=0}{\sum }}\binom{np-1}{k}_{2}\equiv
\left \{ 
\begin{array}{l}
1+np\left( \frac{4}{3}q_{2}+q_{3}\right) \text{ }\left( \func{mod}%
p^{2}\right) \text{ \  \  \ if }p\equiv 1\text{ }\left( \func{mod}6\right)
,\medskip \\ 
\text{ \  \  \ }-\frac{2}{3}npq_{2}\text{ \  \  \  \  \  \  \ }\left( \func{mod}%
p^{2}\right) \text{ \  \  \ if }p\equiv 5\text{ }\left( \func{mod}6\right) .%
\end{array}%
\right.  \label{10}
\end{equation}
\end{proposition}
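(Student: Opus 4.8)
The plan is to reduce both partial sums to \emph{complete} residue-class sums of the trinomial coefficients $\binom{np}{l}_2$ by a generating-function identity, and then to evaluate those coefficients modulo $p^2$. The starting point is the factorisation $(1-x)(1+x+x^2)=1-x^3$, which gives
\[
\frac{(1+x+x^2)^{np-1}}{1-x}=\frac{(1+x+x^2)^{np}}{1-x^3}.
\]
Since multiplication by $1/(1-x)$ produces partial sums and $1/(1-x^3)=\sum_{i\ge0}x^{3i}$, comparing the coefficient of $x^N$ on both sides yields
\[
\sum_{k=0}^{N}\binom{np-1}{k}_2=\sum_{\substack{0\le l\le N\\ l\equiv N\ (\mathrm{mod}\ 3)}}\binom{np}{l}_2 .
\]
I would apply this with $N=p-1$ for \eqref{9} and $N=(p-1)/2$ for \eqref{10}.

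The next step is to evaluate $\binom{np}{l}_2$ modulo $p^2$ for $1\le l\le p-1$. Writing $(1+x+x^2)^{np}=(1-x^3)^{np}(1-x)^{-np}$ gives $\binom{np}{l}_2=\sum_{3j\le l}(-1)^j\binom{np}{j}\binom{np+l-3j-1}{l-3j}$. Both $\binom{np}{j}$ (for $j\ge1$) and $\binom{np+s-1}{s}$ (for $s\ge1$) carry a factor $p$, so modulo $p^2$ only the terms with $j=0$ or $l-3j=0$ survive. Using $\binom{np}{j}\equiv\frac{np}{j}(-1)^{j-1}$ and $\binom{np+l-1}{l}\equiv\frac{np}{l}\pmod{p^2}$ I obtain, for $1\le l\le p-1$,
\[
\binom{np}{l}_2\equiv\frac{np}{l}\ \ (3\nmid l),\qquad \binom{np}{l}_2\equiv-\frac{2np}{l}\ \ (3\mid l)\pmod{p^2},
\]
while $\binom{np}{0}_2=1$ exactly.

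Substituting into the residue-class sums reduces each case to a harmonic-type congruence. For \eqref{9} with $p\equiv1\pmod3$ the sum collapses to $1-\tfrac{2}{3}np\,H_{(p-1)/3}$, and the classical congruence $H_{(p-1)/3}\equiv-\tfrac32 q_3\pmod p$ gives $1+npq_3$; for $p\equiv2\pmod3$ every surviving term has $3\nmid l$, and the involution $l\mapsto p-l$ (which preserves the class $l\equiv1\pmod3$ when $p\equiv2$) pairs terms summing to $0$, giving $0\pmod{p^2}$. For \eqref{10} with $p\equiv1\pmod6$ the sum is $1-\tfrac23 np\,H_{(p-1)/6}$, and Lehmer's congruence $H_{\lfloor p/6\rfloor}\equiv-2q_2-\tfrac32 q_3\pmod p$ yields $1+np(\tfrac43 q_2+q_3)$.

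The main obstacle is the case $p\equiv5\pmod6$ of \eqref{10}, where the sum runs over $l\equiv2\pmod3$, $1\le l\le(p-1)/2$, and I must prove $\sum 1/l\equiv-\tfrac23 q_2\pmod p$. Here I would assemble three ingredients: the value $\tfrac13 H_{\lfloor p/6\rfloor}$ of the $l\equiv0\pmod3$ part of $H_{(p-1)/2}$; the full-range class identity $\sum_{l\equiv2(3)}^{\,p-1}1/l\equiv-\tfrac13 H_{\lfloor p/3\rfloor}\pmod p$, obtained from the pairing $l\mapsto p-l$ which now carries the class $l\equiv0$ onto $l\equiv2$; and the reduction of that full-range sum to the half-range sum by the same substitution applied to the upper half $(p+1)/2\le l\le p-1$. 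Combining these with $H_{\lfloor p/2\rfloor}\equiv-2q_2$, $H_{\lfloor p/3\rfloor}\equiv-\tfrac32 q_3$ and $H_{\lfloor p/6\rfloor}\equiv-2q_2-\tfrac32 q_3\pmod p$ produces exactly $-\tfrac23 q_2$, hence $-\tfrac23 npq_2$. The delicate point throughout is the bookkeeping of which residue classes and half-ranges occur for each residue of $p$ modulo $6$, together with the correct invocation of the three Lehmer congruences.
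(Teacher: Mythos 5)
Your proposal is correct, and it takes a genuinely different route from the paper. The paper first proves term-by-term congruences for $\binom{np-1}{3k}_2$, $\binom{np-1}{3k+1}_2$, $\binom{np-1}{3k+2}_2$ (Proposition \ref{P}, obtained from the cosine expansion of $(1+x+x^2)^n$ together with $\binom{np-1}{k}\equiv(-1)^k(1-npH_k)\pmod{p^2}$), deduces that each triple of consecutive trinomial coefficients sums to $np/(3k+2)$ modulo $p^2$, and then computes the partial sums as blocks of such triples plus boundary terms, the boundary terms being supplied by Theorem \ref{TH}. You instead use $(1-x)(1+x+x^2)=1-x^3$ to convert the partial sums of $\binom{np-1}{k}_2$ into residue-class sums of $\binom{np}{l}_2$, and evaluate $\binom{np}{l}_2$ modulo $p^2$ directly from $(1-x^3)^{np}(1-x)^{-np}$, getting $np/l$ for $3\nmid l$ and $-2np/l$ for $3\mid l$; I checked these evaluations, the pairing arguments under $l\mapsto p-l$, and the chain $\sum_{k=0}^{(p-5)/6}\frac{1}{3k+2}\equiv-\frac13 H_{[p/3]}+\frac13 H_{[p/6]}\equiv-\frac23 q_2\pmod{p}$ in the case $p\equiv 5\pmod 6$, and all are right. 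What your approach buys: it is self-contained, bypassing Theorem \ref{TH} and Proposition \ref{P} entirely, so Proposition \ref{PP} receives an independent proof rather than one riding on the main theorem, and the uniform congruence for $\binom{np}{l}_2$ is a clean statement of independent interest. What the paper's approach buys: Proposition \ref{P} is a reusable term-by-term tool (it also yields Corollary \ref{CC}), and the block-plus-boundary decomposition makes transparent how the partial sums relate to the single coefficients $\binom{np-1}{p-1}_2$ and $\binom{np-1}{(p-1)/2}_2$ of Theorem \ref{TH}. Both proofs ultimately rest on the same classical congruences for $H_{[p/2]}$, $H_{[p/3]}$, $H_{[p/6]}$. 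Two minor remarks: your invocation of $H_{[p/2]}\equiv-2q_2$ is not actually needed in the final assembly for $p\equiv 5\pmod 6$ (only $H_{[p/3]}$ and $H_{[p/6]}$ enter), and that case is only sketched in your write-up, though the sketch is complete enough that the details fill in without surprises.
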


\noindent For $k\leq p-1,$ since%
\begin{equation}
\binom{np-1}{k}=\left( -1\right) ^{k}\overset{k}{\underset{i=1}{\prod }}%
\left( 1-\frac{np}{i}\right) \equiv \left( -1\right) ^{k}\left(
1-npH_{k}\right) \text{ }\left( \func{mod}p^{2}\right)  \label{C55}
\end{equation}%
we conclude that $\binom{np^{2}-1}{k}\equiv \left( -1\right) ^{k}$ $\left( 
\func{mod}p^{2}\right) .$\newline
A similar congruence for the coefficients $\binom{np^{2}-1}{k}_{2}$ is given
as follows:

\begin{corollary}
\label{CC}Let $p\geq 5$ be a prime number and $n,k$ be integers with $n\geq
1 $ and $k\in \left \{ 0,1,\ldots ,p-1\right \} .$ We have 
\begin{equation}
\binom{np^{2}-1}{k}_{2}\equiv \left \{ 
\begin{array}{l}
\text{ \ }1\text{ }\left( \func{mod}p^{2}\right) \text{ \  \ if \ }k\equiv 0%
\text{ }\left( \func{mod}3\right) ,\medskip \\ 
-1\text{ }\left( \func{mod}p^{2}\right) \text{ \  \ if \ }k\equiv 1\text{ }%
\left( \func{mod}3\right) ,\medskip \\ 
\text{ \ }0\text{ }\left( \func{mod}p^{2}\right) \text{ \  \ if \ }k\equiv 2%
\text{ }\left( \func{mod}3\right) .%
\end{array}%
\right.  \label{11}
\end{equation}
\end{corollary}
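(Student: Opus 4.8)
The plan is to turn the trinomial coefficient into a finite sum of ordinary binomial coefficients and then invoke the congruence $\binom{np^{2}-1}{k}\equiv(-1)^{k}\pmod{p^{2}}$ recorded immediately before the statement (itself a consequence of (\ref{C55}) with $n$ replaced by $np$). First I would record the algebraic identity coming from $1+x+x^{2}=1+x(1+x)$, namely
\[
(1+x+x^{2})^{m}=\sum_{i\ge 0}\binom{m}{i}x^{i}(1+x)^{i},
\]
so that, comparing coefficients of $x^{k}$ on both sides,
\[
\binom{m}{k}_{2}=\sum_{i}\binom{m}{i}\binom{i}{k-i},
\]
a sum in which $\binom{i}{k-i}$ vanishes unless $\lceil k/2\rceil\le i\le k$, hence a finite sum.

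Next I would specialize to $m=np^{2}-1$. Because $k\le p-1$, every index $i$ appearing satisfies $i\le k\le p-1$; consequently $H_{i}$ is a $p$-adic integer and the cited congruence $\binom{np^{2}-1}{i}\equiv(-1)^{i}\pmod{p^{2}}$ applies to each summand. Since there are only finitely many terms, I may reduce them one by one and add the resulting congruences to obtain
\[
\binom{np^{2}-1}{k}_{2}\equiv\sum_{i}(-1)^{i}\binom{i}{k-i}\pmod{p^{2}}.
\]

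Finally I would evaluate the integer constant $\sum_{i}(-1)^{i}\binom{i}{k-i}$. Expanding $\frac{1}{1+x+x^{2}}=\sum_{i\ge 0}(-1)^{i}x^{i}(1+x)^{i}$ shows that this sum is exactly the coefficient of $x^{k}$ in $\frac{1}{1+x+x^{2}}$; and since $\frac{1}{1+x+x^{2}}=\frac{1-x}{1-x^{3}}=\sum_{j\ge 0}\left(x^{3j}-x^{3j+1}\right)$, that coefficient equals $1,-1,0$ according as $k\equiv 0,1,2\pmod{3}$. This is precisely (\ref{11}).

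I do not expect a deep obstacle here: the real crux is the choice of expansion. Using $1+x(1+x)$ makes the $k$-dependence collapse into the single closed form $1/(1+x+x^{2})$, whereas the alternative factorization $(1+x+x^{2})^{m}=(1-x^{3})^{m}(1-x)^{-m}$ would leave the less transparent factor $\binom{np^{2}+k-3i-2}{k-3i}$ to be analyzed separately modulo $p^{2}$. The only points genuinely requiring care are that all indices remain $\le p-1$ (so that the cited congruence is legitimate) and that finitely many congruences modulo $p^{2}$ may be summed termwise.
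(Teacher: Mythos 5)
Your proof is correct, but it follows a genuinely different route from the paper's. The paper's own proof is a one-liner: Corollary \ref{CC} is read off from Proposition \ref{P} by replacing $n$ there with $np$, so that each correction term $np\left( \cdots \right)$ becomes $np^{2}\left( \cdots \right)$; since the harmonic-type sums inside the parentheses have all denominators at most $p-1$ and hence are $p$-integral, these terms vanish modulo $p^{2}$, leaving exactly $1$, $-1$, $0$ in the three residue classes of $k$ modulo $3$. You bypass Proposition \ref{P} entirely: your expansion $\binom{m}{k}_{2}=\sum_{i}\binom{m}{i}\binom{i}{k-i}$, which you derive from $1+x+x^{2}=1+x\left( 1+x\right)$, is the same identity (\ref{R1}) of Belbachir et al.\ that the paper invokes in the proof of Theorem \ref{C} (there stated for $k=p-1$ and used for $k=\left( p-1\right)/2$), and your evaluation of $\sum_{i}\left( -1\right) ^{i}\binom{i}{k-i}$ as the coefficient of $x^{k}$ in $1/\left( 1+x+x^{2}\right) =\sum_{j\geq 0}\left( x^{3j}-x^{3j+1}\right)$ is equivalent to Grinberg's identity (\ref{R2}), also used there. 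What your route buys is self-containedness and transparency: it needs only the congruence $\binom{np^{2}-1}{i}\equiv \left( -1\right) ^{i}\pmod{p^{2}}$ recorded before the corollary, it avoids the cosine expansion and all the harmonic-number congruences behind Proposition \ref{P}, and it explains the pattern $1,-1,0$ conceptually as the coefficients of $1/\left( 1+x+x^{2}\right)$. What the paper's route buys is brevity, since Proposition \ref{P} is already established for the proof of Theorem \ref{TH}. Your two points of care --- that every index $i$ satisfies $i\leq k\leq p-1$ so the cited congruence is legitimate, and that finitely many integer congruences modulo $p^{2}$ may be added termwise --- are indeed the only delicate points, and you handle both.
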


\section{Some basic congruences}

In this section, we give some congruences involving harmonic numbers and
trinomial coefficients in order to prove the main theorems.

\begin{lemma}
\cite{EIN,GL,LER} Let $p$ be a prime number. We have%
\begin{eqnarray}
H_{\left[ p/2\right] } &\equiv &-2q_{2}\text{ }\left( \func{mod}p\right) ,\
\  \ p\geq 3,  \label{GL0} \\
H_{\left[ p/3\right] } &\equiv &-\frac{3}{2}q_{3}\text{ }\left( \func{mod}%
p\right) ,\  \  \ p\geq 5,  \label{GL} \\
H_{\left[ p/6\right] } &\equiv &-2q_{2}-\frac{3}{2}q_{3}\text{ }\left( \func{%
mod}p\right) ,\  \  \ p\geq 5.  \label{GL2}
\end{eqnarray}
\end{lemma}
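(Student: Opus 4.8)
The plan is to deduce all three congruences from a single Fermat-quotient identity of Lerch type (these are the classical Glaisher--Lerch congruences cited in the lemma), namely
\[
q_a \equiv \sum_{k=1}^{p-1}\frac{\lfloor ak/p\rfloor}{ak \bmod p}\pmod p
\]
for every $a$ prime to $p$, where $1/(ak\bmod p)$ is the inverse modulo $p$. To prove this identity I would start from $\prod_{k=1}^{p-1}(ak)=a^{p-1}(p-1)!$ and also write $ak=(ak\bmod p)+p\lfloor ak/p\rfloor$. Since $a$ is invertible the residues $ak\bmod p$ run over $1,\dots,p-1$, so $\prod_{k=1}^{p-1}(ak\bmod p)=(p-1)!$; expanding the product of the factors $1+p\,\frac{\lfloor ak/p\rfloor}{ak\bmod p}$ modulo $p^{2}$ and cancelling $(p-1)!$ (which is prime to $p$) gives $a^{p-1}\equiv 1+p\sum_k \frac{\lfloor ak/p\rfloor}{ak\bmod p}\pmod{p^{2}}$, which is the displayed identity after dividing by $p$.

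The case $a=2$ is immediate: $\lfloor 2k/p\rfloor=1$ exactly for $k>p/2$, where $2k\bmod p=2k-p\equiv 2k$, so $q_2\equiv\sum_{k>p/2}\frac{1}{2k}\equiv\frac12\bigl(H_{p-1}-H_{(p-1)/2}\bigr)\equiv-\tfrac12 H_{[p/2]}\pmod p$, using $H_{p-1}\equiv 0$; this is (\ref{GL0}). For $a=3$ I would split $\{1,\dots,p-1\}$ into the three ranges on which $\lfloor 3k/p\rfloor$ equals $0,1,2$; on the upper two $3k\bmod p\equiv 3k$, so $q_3\equiv\frac13\bigl(S_1+2S_2\bigr)$, where $S_1,S_2$ are the harmonic sums over the middle and top ranges. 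The reflection $\frac1k\equiv-\frac1{p-k}\pmod p$ carries the top range bijectively onto $\{1,\dots,[p/3]\}$, so $S_2\equiv-H_{[p/3]}$, while $H_{p-1}\equiv 0$ forces $S_1+S_2\equiv-H_{[p/3]}$; hence $q_3\equiv-\tfrac23 H_{[p/3]}$, which is (\ref{GL}).

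For (\ref{GL2}) I would apply the identity with $a=6$. Writing $\sigma_j$ for the harmonic sum over the range where $\lfloor 6k/p\rfloor=j$, reflection gives $\sigma_j\equiv-\sigma_{5-j}$, and the low ranges are identified with partial harmonic numbers via $\sigma_0=H_{[p/6]}$, $\sigma_0+\sigma_1=H_{[p/3]}$, $\sigma_0+\sigma_1+\sigma_2=H_{[p/2]}$. Collecting terms reduces $q_6\equiv\frac16\sum_{j=1}^{5}j\,\sigma_j$ to $6q_6\equiv-2H_{[p/6]}-2H_{[p/3]}-H_{[p/2]}\pmod p$; substituting $q_6\equiv q_2+q_3$ together with (\ref{GL0}) and (\ref{GL}) and solving for $H_{[p/6]}$ yields the stated value $-2q_2-\tfrac32 q_3$.

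The main obstacle I anticipate is not conceptual but the boundary bookkeeping: one must check, separately for $p\equiv1$ and $p\equiv2\pmod 3$ (and likewise for the residues modulo $6$), that $[2p/3]$, $[p/3]$ and the range endpoints line up so that each reflected set is exactly $\{1,\dots,[p/m]\}$ with no stray term. These are elementary floor-function verifications, and once they are in place the three congruences follow by solving a small linear system in $q_2$ and $q_3$.
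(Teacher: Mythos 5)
Your proposal is correct, but there is no in-paper argument to compare it against: the paper states this lemma as a known result, citing Eisenstein, Glaisher and Lerch, and gives no proof of it. What you have written is essentially a self-contained reconstruction of the classical argument --- your key identity $q_a \equiv \sum_{k=1}^{p-1}\lfloor ak/p\rfloor\,(ak \bmod p)^{-1} \pmod p$ is precisely Lerch's formula from the cited reference, and your derivation of it (expanding $\prod_{k=1}^{p-1}(ak)=a^{p-1}(p-1)!$ modulo $p^{2}$ after writing $ak=(ak\bmod p)+p\lfloor ak/p\rfloor$, then cancelling $(p-1)!$) is sound, since each factor $1+p\lfloor ak/p\rfloor/(ak\bmod p)$ is a $p$-integral rational and $(p-1)!$ is invertible modulo $p^{2}$. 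The three specializations also check out: for $a=2$ the top range gives $q_{2}\equiv -\tfrac{1}{2}H_{[p/2]}$; for $a=3$ the reflection $1/k\equiv -1/(p-k)$ gives $S_{2}\equiv -H_{[p/3]}$ and $H_{p-1}\equiv 0$ gives $S_{1}+S_{2}\equiv -H_{[p/3]}$, hence $q_{3}\equiv -\tfrac{2}{3}H_{[p/3]}$; and for $a=6$ your relation $6q_{6}\equiv -2H_{[p/6]}-2H_{[p/3]}-H_{[p/2]}$ is exactly what the reflection $\sigma_{j}\equiv-\sigma_{5-j}$ and the telescoping identifications produce (I verified the coefficient bookkeeping), from which (\ref{GL2}) follows by solving for $H_{[p/6]}$. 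Two small points should be added for completeness: a one-line proof of the fact $q_{6}\equiv q_{2}+q_{3}\pmod p$ that you invoke, namely $6^{p-1}=2^{p-1}3^{p-1}=(1+pq_{2})(1+pq_{3})\equiv 1+p(q_{2}+q_{3})\pmod{p^{2}}$, and the remark that all the range decompositions are clean (no $k$ falls on a boundary) precisely because $\gcd(6,p)=1$ for $p\geq 5$ --- which also explains why (\ref{GL}) and (\ref{GL2}) are stated for $p\geq 5$ while (\ref{GL0}) needs only $p\geq 3$.
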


\begin{lemma}
For any prime number $p\geq 3$ we have%
\begin{eqnarray}
H_{p-k} &\equiv &H_{k-1}\text{ }\left( \func{mod}p\right) ,\text{ \  \ }1\leq
k\leq p-1,  \label{CONG 0} \\
H_{\frac{p-1}{2}-k} &\equiv &-2q_{2}+2H_{2k}-H_{k}\text{ }\left( \func{mod}%
p\right) ,\text{ \ }1\leq k\leq \frac{p-1}{2}.  \label{CONG 1}
\end{eqnarray}
\end{lemma}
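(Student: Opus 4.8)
The plan is to prove both congruences by the standard reflection trick for reciprocals modulo $p$, namely $\frac{1}{p-i}\equiv -\frac{1}{i}\ (\func{mod}\,p)$ for $1\le i\le p-1$, combined with the telescoping identity $H_{a}-H_{b}=\sum_{j=b+1}^{a}\frac{1}{j}$. Both statements are then obtained by reindexing a short tail of the harmonic sum and collapsing it onto the requested harmonic numbers; neither is deep, and the main work is careful bookkeeping of index ranges.

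For \eqref{CONG 0}, first I would record that pairing $j$ with $p-j$ gives $H_{p-1}=\sum_{j=1}^{p-1}\frac1j\equiv 0\ (\func{mod}\,p)$, since $\frac1j+\frac1{p-j}=\frac{p}{j(p-j)}\equiv 0\ (\func{mod}\,p)$. Then I would write $H_{p-k}=H_{p-1}-\sum_{j=p-k+1}^{p-1}\frac1j$ and reindex the tail by $i=p-j$, turning it into $\sum_{i=1}^{k-1}\frac{1}{p-i}\equiv -\sum_{i=1}^{k-1}\frac1i=-H_{k-1}\ (\func{mod}\,p)$. Combining the two displays gives $H_{p-k}\equiv 0+H_{k-1}=H_{k-1}\ (\func{mod}\,p)$, which is the claim.

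For \eqref{CONG 1}, set $m=\frac{p-1}{2}=[p/2]$, so that \eqref{GL0} yields $H_m\equiv -2q_2\ (\func{mod}\,p)$ and it suffices to prove $H_{m-k}-H_m\equiv 2H_{2k}-H_k\ (\func{mod}\,p)$. Writing $H_{m-k}-H_m=-\sum_{j=m-k+1}^{m}\frac1j=-\sum_{i=0}^{k-1}\frac{1}{m-i}$ and using $m-i=\frac{p-1-2i}{2}$ together with $p\equiv 0\ (\func{mod}\,p)$, each term satisfies $\frac{1}{m-i}\equiv -\frac{2}{2i+1}\ (\func{mod}\,p)$. Hence $H_{m-k}-H_m\equiv 2\sum_{i=0}^{k-1}\frac{1}{2i+1}\ (\func{mod}\,p)$, the doubled sum of the odd reciprocals up to $2k-1$.

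The key step is then to express this odd-reciprocal sum through ordinary harmonic numbers: separating the even reciprocals $\frac12+\frac14+\cdots+\frac1{2k}=\frac12 H_k$ out of $H_{2k}$ shows $\sum_{i=0}^{k-1}\frac{1}{2i+1}=H_{2k}-\frac12 H_k$. Substituting gives $H_{m-k}-H_m\equiv 2H_{2k}-H_k\ (\func{mod}\,p)$, and adding $H_m\equiv -2q_2$ produces \eqref{CONG 1}. The only point demanding attention is that the arguments $m-k$ and $2k$ remain in $\{0,\dots,p-1\}$, which is exactly guaranteed by the hypothesis $1\le k\le\frac{p-1}{2}$, and that the even terms are correctly matched to $\frac12 H_k$; everything else is routine.
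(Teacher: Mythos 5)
Your proof is correct and follows essentially the same route as the paper: both arguments split the harmonic sum, apply the reflection $\tfrac{1}{p-i}\equiv-\tfrac{1}{i}\ (\operatorname{mod}p)$ to the reindexed tail, and invoke $H_{p-1}\equiv 0$ and $H_{(p-1)/2}\equiv-2q_{2}$ before converting the odd-reciprocal sum into $2H_{2k}-H_{k}$. The only differences are cosmetic (you absorb the $j=m$ term into the sum and prove $H_{p-1}\equiv 0$ by pairing rather than citing it), so no further comment is needed.
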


\begin{proof}
When $k\in \left \{ 1,2,\ldots ,p-1\right \} ,$ it is obvious that we have%
\begin{equation*}
H_{p-k}=\sum \limits_{i=1}^{p-k}\frac{1}{i}=\sum \limits_{i=1}^{p-1}\frac{1}{%
i}-\sum \limits_{i=p-k+1}^{p-1}\frac{1}{i}=H_{p-1}-\sum \limits_{i=1}^{k-1}%
\frac{1}{p-k+i}.
\end{equation*}%
Then, since $H_{p-1}\equiv 0$ $\left( \func{mod}p\right) $ we get $%
H_{p-k}\equiv \sum \limits_{i=1}^{k-1}\frac{1}{k-i}=H_{k-1}$ $\left( \func{%
mod}p\right) .$

\noindent Similarly, if $k\in \left \{ 1,2,\ldots ,\left( p-1\right)
/2\right \} ,$ we get%
\begin{eqnarray*}
H_{\left( p-1\right) /2-k} &=&\dsum \limits_{j=1}^{\left( p-1\right) /2-k}%
\frac{1}{j}=\dsum \limits_{j=1}^{\left( p-1\right) /2}\frac{1}{j}-\dsum
\limits_{j=\left( p-1\right) /2-k+1}^{\left( p-1\right) /2}\frac{1}{j} \\
&=&H_{\left( p-1\right) /2}-\frac{2}{p-1}-\dsum \limits_{j=\left( p-1\right)
/2-k+1}^{\left( p-3\right) /2}\frac{1}{j} \\
&=&H_{\left( p-1\right) /2}-\frac{2}{p-1}-\dsum \limits_{j=1}^{k-1}\frac{1}{%
\left( p-1\right) /2-j} \\
&=&H_{\left( p-1\right) /2}-\frac{2}{p-1}-\dsum \limits_{j=1}^{k-1}\frac{2}{%
p-1-2j},
\end{eqnarray*}%
and since $H_{\left( p-1\right) /2}\equiv -2q_{2}$ $\left( \func{mod}%
p\right) $ \cite{EIN}, we conclude that%
\begin{eqnarray*}
H_{\left( p-1\right) /2-k} &\equiv &-2q_{2}+2+2\dsum \limits_{j=1}^{k-1}%
\frac{1}{2j+1} \\
&=&-2q_{2}+2+2\left( \dsum \limits_{j=1}^{2k-1}\frac{1}{j}-\dsum
\limits_{j=1}^{k-1}\frac{1}{2j}-1\right) \\
&=&-2q_{2}+2H_{2k-1}-H_{k-1} \\
&=&-2q_{2}+2H_{2k}-H_{k}\text{ }\left( \func{mod}p\right) .
\end{eqnarray*}
\end{proof}

\begin{lemma}
Let $p\geq 5$ be a prime number. Then, if $p\equiv 1$ $\left( \func{mod}%
3\right) $ we obtain%
\begin{eqnarray}
\dsum \limits_{k=0}^{\left( p-4\right) /3}\frac{1}{3k+2} &\equiv &0\text{ }%
\left( \func{mod}p\right) ,  \label{C1b} \\
\dsum \limits_{k=0}^{\left( p-4\right) /3}\frac{1}{3k+1} &\equiv &\frac{1}{2}%
q_{3}\text{ }\left( \func{mod}p\right) ,  \label{C1c}
\end{eqnarray}%
and if $p\equiv 2$ $\left( \func{mod}3\right) $ we obtain%
\begin{eqnarray}
\dsum \limits_{k=0}^{\left( p-5\right) /3}\frac{1}{3k+1} &\equiv &1\text{ }%
\left( \func{mod}p\right) ,  \label{C2b} \\
\dsum \limits_{k=0}^{\left( p-5\right) /3}\frac{1}{3k+2} &\equiv &\frac{1}{2}%
q_{3}\text{ }\left( \func{mod}p\right) .  \label{C2c}
\end{eqnarray}
\end{lemma}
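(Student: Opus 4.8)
The plan is to classify the integers $1,2,\ldots,p-1$ by their residue modulo $3$ and to exploit the involution $j\mapsto p-j$, which satisfies $\frac{1}{p-j}\equiv-\frac{1}{j}\pmod p$ for $1\le j\le p-1$. The only external ingredient I would use is the congruence $H_{[p/3]}\equiv-\frac{3}{2}q_3\pmod p$ from \eqref{GL}; together with the trivial identity $\sum_{j=1}^{[p/3]}\frac{1}{3j}=\frac{1}{3}H_{[p/3]}$ it evaluates the reciprocal sum over the multiples of $3$ lying in $[1,p-1]$ as $\equiv-\frac{1}{2}q_3\pmod p$. Everything else is a matter of tracking which progression the involution sends each residue class to.

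First I would treat $p\equiv1\pmod3$, so that $p-1\equiv0$, $p-2\equiv2$ and $p-3\equiv1\pmod3$. Writing $S_r$ for the sum of $1/j$ over those $j\in\{1,\ldots,p-1\}$ with $j\equiv r\pmod3$, the sum in \eqref{C1b} is exactly $S_2$ (its terms run $2,5,\ldots,p-2$), the sum in \eqref{C1c} is exactly $S_1$ (its terms run $1,4,\ldots,p-3$), while $S_0=\frac{1}{3}H_{[p/3]}$ since the multiples of $3$ are $3,6,\ldots,p-1$. Under $j\mapsto p-j$ the class $r=2$ maps to itself, so $S_2\equiv-S_2\pmod p$ and hence $S_2\equiv0$, giving \eqref{C1b}; the same map sends the class $r=0$ bijectively onto the class $r=1$, whence $S_1\equiv-S_0=-\frac{1}{3}H_{[p/3]}\equiv\frac{1}{2}q_3\pmod p$, giving \eqref{C1c}.

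Next I would treat $p\equiv2\pmod3$, so that $p-1\equiv1$, $p-2\equiv0$ and $p-3\equiv2\pmod3$. Now the involution fixes the class $r=1$ setwise and swaps $r=0$ with $r=2$. The sum in \eqref{C2c} is the full class-$2$ sum $T_2$ (terms $2,5,\ldots,p-3$) and the full class-$0$ sum is $T_0=\frac{1}{3}H_{[p/3]}$ (terms $3,6,\ldots,p-2$), so the swap yields $T_2\equiv-T_0\equiv\frac{1}{2}q_3\pmod p$, which is \eqref{C2c}. For \eqref{C2b} one must watch the endpoint: its terms run $1,4,\ldots,p-4$, so it is the full class-$1$ sum with the single term $\frac{1}{p-1}$ deleted. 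Self-pairing shows the full class-$1$ sum is $\equiv0\pmod p$, and since $\frac{1}{p-1}\equiv-1\pmod p$ we obtain $\sum_{k=0}^{(p-5)/3}\frac{1}{3k+1}\equiv0-(-1)=1\pmod p$, which is \eqref{C2b}.

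The main obstacle I anticipate is precisely this boundary bookkeeping: correctly matching the stated upper limits $(p-4)/3$ and $(p-5)/3$ to the appropriate arithmetic progressions, and in the case $p\equiv2\pmod3$ accounting for the extra endpoint $1/(p-1)$ that separates the truncated sum in \eqref{C2b} from the full residue class. Once the three residue classes are correctly identified in each case, the involution and the single input \eqref{GL} close the argument with essentially no further computation.
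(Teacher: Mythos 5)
Your proof is correct and takes essentially the same approach as the paper: both rest on the reflection $j\mapsto p-j$ (which gives $\frac{1}{p-j}\equiv -\frac{1}{j}$ modulo $p$) combined with Glaisher's congruence (\ref{GL}) for $H_{\left[ p/3\right] }$. The only differences are minor: for (\ref{C1c}) the paper subtracts the class-$0$ and class-$2$ sums from $H_{p-1}\equiv 0$ rather than pairing class $0$ directly with class $1$ as you do, and it dismisses the case $p\equiv 2$ $\left( \func{mod}3\right) $ as ``similar,'' whereas you carry out explicitly the endpoint bookkeeping for the term $1/(p-1)$ in (\ref{C2b}).
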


\begin{proof}
For any prime number $p\equiv 1$ $\left( \func{mod}3\right) ,$ we have%
\begin{equation*}
\dsum \limits_{k=0}^{\left( p-4\right) /3}\frac{1}{3k+2}=\dsum
\limits_{k=0}^{\left( p-4\right) /3}\frac{1}{3\left( \frac{p-4}{3}-k\right)
+2}\equiv -\dsum \limits_{k=0}^{\left( p-4\right) /3}\frac{1}{2+3k}\text{ }%
\left( \func{mod}p\right)
\end{equation*}%
and this gives the congruence (\ref{C1b}). From the identity%
\begin{equation*}
\dsum \limits_{k=0}^{\left( p-4\right) /3}\frac{1}{3k+1}+\dsum
\limits_{k=0}^{\left( p-4\right) /3}\frac{1}{3k+2}+\dsum
\limits_{k=0}^{\left( p-4\right) /3}\frac{1}{3k+3}=\dsum \limits_{k=1}^{p-1}%
\frac{1}{k}
\end{equation*}%
and by the congruences $H_{p-1}\equiv 0$ $\left( \func{mod}p\right) $ and (%
\ref{GL}) it results 
\begin{equation*}
\dsum \limits_{k=0}^{\left( p-4\right) /3}\frac{1}{3k+1}=H_{p-1}-\frac{1}{3}%
H_{\left[ p/3\right] }-\dsum \limits_{k=0}^{\left( p-4\right) /3}\frac{1}{%
3k+2}\equiv 0+\frac{1}{2}q_{3}-0\equiv \frac{1}{2}q_{3}\text{ }\left( \func{%
mod}p\right)
\end{equation*}%
which gives the congruence (\ref{C1c}). \newline
Also, if $p\equiv 2$ $\left( \func{mod}3\right) ,$ the other congruences can
be proved similarly.
\end{proof}

\begin{lemma}
For any prime number $p\geq 5$ we have%
\begin{eqnarray}
\dsum \limits_{k=0}^{\left( p-1\right) /6}\frac{1}{2k+1} &\equiv &q_{2}-%
\frac{3}{4}q_{3}+\frac{3}{2}\text{ }\left( \func{mod}p\right) \text{ \ if }%
p\equiv 1\text{ }\left( \func{mod}6\right) ,  \label{C3} \\
\dsum \limits_{k=0}^{\left( p-5\right) /6}\frac{1}{2k+1} &\equiv &q_{2}-%
\frac{3}{4}q_{3}\text{ }\left( \func{mod}p\right) \text{ \ if }p\equiv 5%
\text{ }\left( \func{mod}6\right) .  \label{C3b}
\end{eqnarray}
\end{lemma}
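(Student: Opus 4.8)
The plan is to reduce the sum over odd denominators to ordinary harmonic numbers and then invoke the congruences (\ref{GL}) and (\ref{GL2}) from the first lemma. The key elementary identity is that, for any nonnegative integer $m$, separating the reciprocals of odd and even integers up to $2m+1$ gives
\begin{equation*}
\sum_{k=0}^{m}\frac{1}{2k+1}=H_{2m+1}-\frac{1}{2}H_{m},
\end{equation*}
since the even denominators $2,4,\ldots,2m$ contribute $\tfrac12 H_m$. Everything then comes down to expressing the two indices $m$ and $2m+1$ in terms of $\left[p/6\right]$ and $\left[p/3\right]$ in each residue class and substituting the known values of $H_{\left[p/3\right]}$ and $H_{\left[p/6\right]}$.

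First I would treat the case $p\equiv 1\ (\func{mod}6)$, where the summation bound is $m=\left[p/6\right]=(p-1)/6$. Then $2m=(p-1)/3=\left[p/3\right]$, so $2m+1=\left[p/3\right]+1$ and the identity above reads $\sum_{k=0}^{(p-1)/6}\frac{1}{2k+1}=H_{\left[p/3\right]+1}-\tfrac12 H_{\left[p/6\right]}$. Here $H_{\left[p/3\right]+1}=H_{\left[p/3\right]}+\frac{1}{\left[p/3\right]+1}$, and since $\left[p/3\right]+1=(p+2)/3\equiv 2/3\ (\func{mod}p)$ the extra term satisfies $\frac{1}{\left[p/3\right]+1}\equiv \tfrac32\ (\func{mod}p)$. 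Applying $H_{\left[p/3\right]}\equiv -\tfrac32 q_3$ from (\ref{GL}) and $H_{\left[p/6\right]}\equiv -2q_2-\tfrac32 q_3$ from (\ref{GL2}) yields $-\tfrac32 q_3+\tfrac32-\tfrac12(-2q_2-\tfrac32 q_3)=q_2-\tfrac34 q_3+\tfrac32$, which is exactly (\ref{C3}).

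For the case $p\equiv 5\ (\func{mod}6)$ the summation bound is $m=\left[p/6\right]=(p-5)/6$, and now $2m+1=(p-2)/3=\left[p/3\right]$ directly (recall $p\equiv 2\ (\func{mod}3)$ here), with no index shift and hence no extra $\tfrac32$ term. The identity gives $\sum_{k=0}^{(p-5)/6}\frac{1}{2k+1}=H_{\left[p/3\right]}-\tfrac12 H_{\left[p/6\right]}$, and substituting the same two values from (\ref{GL}) and (\ref{GL2}) produces $-\tfrac32 q_3-\tfrac12(-2q_2-\tfrac32 q_3)=q_2-\tfrac34 q_3$, which is (\ref{C3b}).

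The computation is essentially bookkeeping, so there is no serious obstacle; the only subtle point is the index shift in the case $p\equiv 1\ (\func{mod}6)$, where $2m+1$ overshoots $\left[p/3\right]$ by one. Correctly isolating the boundary term $\frac{1}{\left[p/3\right]+1}$ and reducing it modulo $p$ is what accounts for the additive constant $\tfrac32$ that distinguishes (\ref{C3}) from (\ref{C3b}); once this is handled the two formulas follow uniformly from the values of $H_{\left[p/3\right]}$ and $H_{\left[p/6\right]}$ already recorded in the lemma of Eisenstein, Glaisher and Lerch.
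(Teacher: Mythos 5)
Your proof is correct and takes essentially the same route as the paper's: both split the harmonic sum over $\{1,\ldots,2m+1\}$ into its odd-denominator and even-denominator parts (the paper writes this as $\sum_{k}\frac{1}{2k+1}+\sum_{k}\frac{1}{2k}=H_{2m+1}$, you as $\sum_{k}\frac{1}{2k+1}=H_{2m+1}-\frac{1}{2}H_{m}$), then substitute the congruences (\ref{GL}) and (\ref{GL2}) for $H_{\left[ p/3\right] }$ and $H_{\left[ p/6\right] }$, handling the same boundary term $\frac{3}{p+2}\equiv \frac{3}{2}$ in the case $p\equiv 1 \pmod 6$. There are no gaps.
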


\begin{proof}
For $p\equiv 1$ $\left( \func{mod}6\right) $ use the congruence (\ref{GL})
to obtain%
\begin{equation*}
\dsum \limits_{k=0}^{\left( p-1\right) /6}\frac{1}{2k+1}+\dsum
\limits_{k=1}^{\left( p-1\right) /6}\frac{1}{2k}=\dsum \limits_{k=1}^{\left(
p-1\right) /3+1}\frac{1}{k}=H_{\left[ p/3\right] }+\frac{3}{p+2}\equiv -%
\frac{3}{2}q_{3}+\frac{3}{2}\text{ }\left( \func{mod}p\right) ,
\end{equation*}%
and for $p\equiv 5$ $\left( \func{mod}6\right) $ use the congruence (\ref{GL}%
) to obtain%
\begin{equation*}
\dsum \limits_{k=0}^{\left( p-5\right) /6}\frac{1}{2k+1}+\dsum
\limits_{k=1}^{\left( p-5\right) /6}\frac{1}{2k}=\dsum \limits_{k=1}^{\left(
p-2\right) /3}\frac{1}{k}=H_{\left[ p/3\right] }\equiv -\frac{3}{2}q_{3}%
\text{ }\left( \func{mod}p\right) .
\end{equation*}%
By using in the last congruences the congruence (\ref{GL2}) 
\begin{equation*}
\dsum \limits_{k=0}^{\left[ p/6\right] }\frac{1}{2k}=\frac{1}{2}H_{\left[ p/6%
\right] }\equiv -q_{2}-\frac{3}{4}q_{3}\text{ }\left( \func{mod}p\right) ,
\end{equation*}%
the desired congruences follow.
\end{proof}

\begin{lemma}
Let $p$ be a prime number. Then, for $p\equiv 1$ $\left( \func{mod}6\right) $
we have%
\begin{eqnarray}
\dsum \limits_{k=0}^{\left( p-1\right) /6}\frac{1}{3k+1} &\equiv &-\frac{2}{3%
}q_{2}+2\text{ }\left( \func{mod}p\right) ,  \label{H0} \\
\dsum \limits_{k=0}^{\left( p-1\right) /6}\frac{1}{3k+2} &\equiv &-\frac{2}{3%
}q_{2}+\frac{1}{2}q_{3}+\frac{2}{3}\text{ }\left( \func{mod}p\right) ,
\label{H1}
\end{eqnarray}%
and, for $p\equiv 5$ $\left( \func{mod}6\right) $ we have 
\begin{eqnarray}
\dsum \limits_{k=0}^{\left( p-5\right) /6}\frac{1}{3k+1} &\equiv &\frac{1}{2}%
q_{3}-\frac{2}{3}q_{2}\text{ }\left( \func{mod}p\right) ,  \label{H3} \\
\dsum \limits_{k=0}^{\left( p-5\right) /6}\frac{1}{3k+2} &\equiv &-\frac{2}{3%
}q_{2}\text{ }\left( \func{mod}p\right) .  \label{H2}
\end{eqnarray}
\end{lemma}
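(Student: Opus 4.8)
The plan is to reduce each of the four congruences to the harmonic evaluations already available, namely $H_{[p/2]}\equiv-2q_{2}$ in (\ref{GL0}) and $H_{[p/6]}\equiv-2q_{2}-\frac{3}{2}q_{3}$ in (\ref{GL2}), together with the two ``long'' residue sums (\ref{C1c}) and (\ref{C2c}) from the previous lemma. Throughout I write $m=[p/6]$, so that $m=(p-1)/6$ when $p\equiv1\left( \func{mod}6\right)$ and $m=(p-5)/6$ when $p\equiv5\left( \func{mod}6\right)$, and I set $T_{1}=\sum_{k=0}^{m}\frac{1}{3k+1}$ and $T_{2}=\sum_{k=0}^{m}\frac{1}{3k+2}$, the two sums to be computed. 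The idea is to obtain the combination $T_{1}+T_{2}$ from a residue decomposition of $H_{(p-1)/2}$, to obtain one of $T_{1},T_{2}$ directly by a reflection applied to a longer sum, and then to recover the remaining sum by a single subtraction.

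\textbf{The sum $T_{1}+T_{2}$.} First I would split $H_{(p-1)/2}=\sum_{j=1}^{(p-1)/2}\frac{1}{j}$ according to the residue of $j$ modulo $3$. The multiples of $3$ contribute $\frac{1}{3}H_{m}$ (using $[p/6]=m$), while the classes $j\equiv1$ and $j\equiv2$ contribute exactly $T_{1}$ and $T_{2}$ when $p\equiv5\left( \func{mod}6\right)$, and $T_{1}-\frac{1}{3m+1}$ and $T_{2}-\frac{1}{3m+2}$ when $p\equiv1\left( \func{mod}6\right)$ (the two topmost terms then fall just outside the range $j\le(p-1)/2$). Substituting $H_{(p-1)/2}\equiv-2q_{2}$ from (\ref{GL0}) and $H_{m}\equiv-2q_{2}-\frac{3}{2}q_{3}$ from (\ref{GL2}), and, in the case $p\equiv1$, the boundary residues $\frac{1}{3m+1}\equiv2$ and $\frac{1}{3m+2}\equiv\frac{2}{3}\left( \func{mod}p\right)$, yields $T_{1}+T_{2}$ explicitly in both cases.

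\textbf{One sum directly.} To break the symmetry I would evaluate one of $T_{1},T_{2}$ alone by exploiting the reflection $j\mapsto p-j$ on the longer sum. When $p\equiv1\left( \func{mod}6\right)$ the map $3k+1\mapsto p-(3k+1)=3(2m-k)$ sends the class $1$ into the class $0$, so every term of (\ref{C1c}) satisfies $\frac{1}{3k+1}\equiv-\frac{1}{3}\cdot\frac{1}{2m-k}\left( \func{mod}p\right)$; applying this to the upper half $k=m+1,\dots,2m-1$ turns the tail of (\ref{C1c}) into $-\frac{1}{3}H_{m-1}$, and subtracting it from the known value $\frac{1}{2}q_{3}$ of (\ref{C1c}) gives $T_{1}$, after using $H_{m-1}=H_{m}-\frac{1}{m}$ with $\frac{1}{m}\equiv-6$. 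When $p\equiv5\left( \func{mod}6\right)$ the ``nice'' class is instead $2$, since $3k+2\mapsto p-(3k+2)=3(2m+1-k)$; the same computation turns the tail of (\ref{C2c}) into $-\frac{1}{3}H_{m}$ and produces $T_{2}$ directly.

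\textbf{Conclusion and main difficulty.} A single subtraction of the directly computed sum from $T_{1}+T_{2}$ then produces the remaining one, giving (\ref{H0})--(\ref{H1}) for $p\equiv1\left( \func{mod}6\right)$ and (\ref{H3})--(\ref{H2}) for $p\equiv5\left( \func{mod}6\right)$. The only real obstacle is the bookkeeping: one must pin down the floor values $[p/2],[p/3],[p/6]$ in terms of $m$, track the exact index ranges surviving each reflection, and compute the small rational residues such as $\frac{1}{3m+1}$ and $\frac{1}{m}$ correctly modulo $p$. The conceptual reason Step 1 cannot be bypassed is that the complementary class is self-dual under $j\mapsto p-j$ (class $2$ when $p\equiv1$, class $1$ when $p\equiv5$), so reflection alone yields only a tautology for it and the decomposition of $H_{(p-1)/2}$ is needed to separate the two sums.
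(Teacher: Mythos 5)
Your proof is correct — I checked both residue cases, including the boundary values $\tfrac{1}{3m+1}\equiv 2$, $\tfrac{1}{3m+2}\equiv \tfrac{2}{3}$ and $\tfrac{1}{m}\equiv -6 \pmod{p}$ for $p\equiv 1 \pmod 6$, and the reflected tails $-\tfrac{1}{3}H_{m-1}$ (case $p\equiv 1$) and $-\tfrac{1}{3}H_{m}$ (case $p\equiv 5$); all four congruences come out with the stated constants — but it takes a genuinely different route from the paper's. The shared ingredient is the residue-class decomposition modulo $3$ of a half-range harmonic sum: the paper also uses it, but only as its \emph{second} step (applied to $\sum_{k\leq (p+5)/2}1/k$, resp.\ $\sum_{k\leq (p+1)/2}1/k$) to extract the class-$1$ sum once the class-$2$ sum is known, whereas you use it \emph{first}, applied to $H_{(p-1)/2}$, to get $T_{1}+T_{2}$. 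The real divergence is in the second equation: the paper evaluates the class-$2$ sum by the doubling trick $\tfrac{1}{3k+2}=\tfrac{2}{6k+4}$, completing the range to $0\leq k\leq \left[ p/3\right] $ and converting the leftover piece into an odd-reciprocal sum, which forces it to prove and invoke the auxiliary congruences (\ref{C3})--(\ref{C3b}) (and, for $p\equiv 1 \pmod 6$, the congruence (\ref{C1b})); you instead reflect the tail of the full class sums (\ref{C1c}), resp.\ (\ref{C2c}), by $j\mapsto p-j$, which lands in the class of multiples of $3$ and collapses the tail to $-\tfrac{1}{3}H_{m-1}$, resp.\ $-\tfrac{1}{3}H_{m}$, so that only (\ref{GL0}), (\ref{GL2}) and (\ref{C1c})/(\ref{C2c}) are needed. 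Your version is shorter, treats the cases $p\equiv 1$ and $p\equiv 5 \pmod 6$ uniformly, makes the lemma giving (\ref{C3})--(\ref{C3b}) unnecessary for this purpose, and isolates the structural point cleanly: the class that is self-dual under $j\mapsto p-j$ (class $2$ when $p\equiv 1$, class $1$ when $p\equiv 5$) can never be evaluated by reflection alone, which is exactly why one decomposition of $H_{(p-1)/2}$ is indispensable. What the paper's longer route buys is the free-standing odd-reciprocal congruences (\ref{C3})--(\ref{C3b}), of some independent interest but not used elsewhere in the paper. (Incidentally, in the paper's derivation of (\ref{H1}) the cited congruence (\ref{C2c}) should in fact be (\ref{C1b}).)
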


\begin{proof}
For $p\equiv 1$ $\left( \func{mod}6\right) ,$ by the congruences (\ref{C2c})
and (\ref{C3}) we get%
\begin{eqnarray*}
\dsum \limits_{k=0}^{\left( p-1\right) /6}\frac{1}{3k+2} &=&2\dsum%
\limits_{k=0}^{\left( p-1\right) /6}\frac{1}{6k+4} \\
&=&2\dsum \limits_{k=0}^{\left( p-1\right) /3}\frac{1}{6k+4}-2\dsum
\limits_{k=\left( p-1\right) /6+1}^{\left( p-1\right) /3}\frac{1}{6k+4} \\
&=&\dsum \limits_{k=0}^{\left( p-1\right) /3}\frac{1}{3k+2}-2\dsum
\limits_{k=1}^{\left( p-1\right) /6}\frac{1}{6k+p+3} \\
&\equiv &1-\frac{2}{3}\dsum \limits_{k=1}^{\left( p-1\right) /6}\frac{1}{2k+1%
} \\
&\equiv &1-\frac{2}{3}\left( q_{2}-\frac{3}{4}q_{3}+\frac{1}{2}\right) \\
&=&-\frac{2}{3}q_{2}+\frac{1}{2}q_{3}+\frac{2}{3}\text{ }\left( \func{mod}%
p\right) ,
\end{eqnarray*}%
We also have%
\begin{equation*}
\dsum \limits_{k=1}^{\left( p+5\right) /2}\frac{1}{k}=\dsum
\limits_{k=0}^{\left( p-1\right) /6}\frac{1}{3k+1}+\dsum
\limits_{k=0}^{\left( p-1\right) /6}\frac{1}{3k+2}+\frac{1}{3}\dsum
\limits_{k=0}^{\left( p-1\right) /6}\frac{1}{k+1}
\end{equation*}%
which gives on using the congruences (\ref{GL0}), (\ref{GL2}) and (\ref{H1})%
\begin{eqnarray*}
\dsum \limits_{k=0}^{\left( p-1\right) /6}\frac{1}{3k+1} &\equiv &\dsum
\limits_{k=1}^{\left( p+5\right) /2}\frac{1}{k}-\dsum \limits_{k=0}^{\left(
p-1\right) /6}\frac{1}{3k+2}-\frac{1}{3}\dsum \limits_{k=0}^{\left(
p-1\right) /6}\frac{1}{k+1} \\
&=&H_{\left[ p/2\right] }+\frac{2}{p+5}+\frac{2}{p+3}+\frac{2}{p+1}-\dsum
\limits_{k=0}^{\left( p-1\right) /6}\frac{1}{3k+2}-\frac{1}{3}\left( H_{%
\left[ p/6\right] }+\frac{6}{p+5}\right) \\
&\equiv &-2q_{2}+\frac{8}{3}-\left( -\frac{2}{3}q_{2}+\frac{1}{2}q_{3}+\frac{%
2}{3}\right) -\frac{1}{3}\left( -2q_{2}-\frac{3}{2}q_{3}\right) \\
&=&-\frac{2}{3}q_{2}+2\text{ }\left( \func{mod}p\right) .
\end{eqnarray*}%
For $p\equiv 5$ $\left( \func{mod}6\right) $ use the congruence (\ref{C2c})
to get%
\begin{equation*}
\dsum \limits_{k=0}^{\left( p-5\right) /6}\frac{1}{3k+2}=2\dsum%
\limits_{k=0}^{\left( p-5\right) /3}\frac{1}{6k+4}-2\dsum \limits_{k=\left(
p-5\right) /6+1}^{\left( p-5\right) /3}\frac{1}{6k+4}
\end{equation*}%
\begin{eqnarray*}
&=&\dsum \limits_{k=0}^{\left( p-5\right) /3}\frac{1}{3k+2}-2\dsum
\limits_{k=1}^{\left( p-5\right) /6}\frac{1}{6k+p-1} \\
&\equiv &\frac{1}{2}q_{3}-2\dsum \limits_{k=1}^{\left( p-5\right) /6}\frac{1%
}{6k-1}\text{ }\left( \func{mod}p\right) .
\end{eqnarray*}%
by setting $k=\left( p+1\right) /6-j$ and using (\ref{GL2}) this last
congruence becomes 
\begin{equation*}
\dsum \limits_{k=1}^{\left( p-5\right) /6}\frac{1}{6k-1}\equiv -\frac{1}{6}%
\dsum \limits_{j=1}^{\left( p-5\right) /6}\frac{1}{j}=-\frac{1}{6}H_{\left[
p/6\right] }\equiv \frac{1}{3}q_{2}+\frac{1}{4}q_{3}\text{ }\left( \func{mod}%
p\right) ,
\end{equation*}%
hence $\dsum \limits_{k=0}^{\left( p-5\right) /6}\frac{1}{3k+2}\equiv \frac{1%
}{2}q_{3}-2\left( \frac{1}{3}q_{2}+\frac{1}{4}q_{3}\right) \equiv -\frac{2}{3%
}q_{2}$ $\left( \func{mod}p\right) .$ We also have%
\begin{equation*}
\dsum \limits_{k=0}^{\left( p-5\right) /6}\frac{1}{3k+1}+\dsum
\limits_{k=0}^{\left( p-5\right) /6}\frac{1}{3k+2}+\frac{1}{3}\dsum
\limits_{k=0}^{\left( p-5\right) /6}\frac{1}{k+1}=\dsum
\limits_{k=1}^{\left( p+1\right) /2}\frac{1}{k}
\end{equation*}%
and by using the congruences (\ref{GL0}), (\ref{GL}) and (\ref{H2}) this
gives%
\begin{eqnarray*}
\dsum \limits_{k=0}^{\left( p-5\right) /6}\frac{1}{3k+1} &=&\left( \frac{2}{%
p+1}+H_{\left[ p/2\right] }\right) -\frac{1}{3}\left( \frac{6}{p+1}+H_{\left[
p/6\right] }\right) -\dsum \limits_{k=0}^{\left( p-5\right) /6}\frac{1}{3k+2}
\\
&\equiv &\frac{1}{2}q_{3}-\frac{2}{3}q_{2}\text{ }\left( \func{mod}p\right) .
\end{eqnarray*}
\end{proof}

\begin{proposition}
\label{P}Let $p\geq 5$ be a prime number and $n,k$ be positive integers. We
have%
\begin{eqnarray}
\binom{np-1}{3k}_{2} &\equiv &1-np\left( \frac{2}{3}H_{k}+\dsum%
\limits_{j=0}^{k-1}\frac{1}{3j+2}\right) \text{ }\left( \func{mod}%
p^{2}\right) ,\  \  \ 3k\leq p-1,  \label{CC1} \\
\binom{np-1}{3k+1}_{2} &\equiv &-1+np\left( \frac{2}{3}H_{k}+\dsum%
\limits_{j=0}^{k}\frac{1}{3j+1}\right) \text{ }\left( \func{mod}p^{2}\right)
,\  \  \ 3k+1\leq p-1,  \label{CC2} \\
\binom{np-1}{3k+2}_{2} &\equiv &np\left( -\dsum \limits_{j=0}^{k}\frac{1}{%
3j+1}+\dsum \limits_{j=0}^{k}\frac{1}{3j+2}\right) \text{ }\left( \func{mod}%
p^{2}\right) ,\  \  \ 3k+2\leq p-1.  \label{CC3}
\end{eqnarray}
\end{proposition}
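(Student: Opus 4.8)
The plan is to extract an explicit binomial double sum from the factorization $1+x+x^{2}=(1-x^{3})/(1-x)$ and then reduce each factor modulo $p^{2}$. Writing $(1+x+x^{2})^{np-1}=(1-x^{3})^{np-1}(1-x)^{-(np-1)}$, expanding $(1-x^{3})^{np-1}=\sum_{i}(-1)^{i}\binom{np-1}{i}x^{3i}$ and $(1-x)^{-(np-1)}=\sum_{j}\binom{np+j-2}{j}x^{j}$, and reading off the coefficient of $x^{r}$ gives
\[
\binom{np-1}{r}_{2}=\sum_{3i\le r}(-1)^{i}\binom{np-1}{i}\binom{np+r-3i-2}{r-3i},\qquad 0\le r\le p-1.
\]
The three assertions are the cases $r=3k$, $r=3k+1$, $r=3k+2$, so it suffices to evaluate this right-hand side modulo $p^{2}$ and then specialise $r$.

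For the first factor, since $0\le i\le\lfloor r/3\rfloor<p$, congruence \eqref{C55} yields $(-1)^{i}\binom{np-1}{i}\equiv 1-npH_{i}\pmod{p^{2}}$. For the second factor, set $j=r-3i\le p-1$ and write $\binom{np+j-2}{j}=\frac{1}{j!}(np-1)(np)(np+1)\cdots(np+j-2)$. Because $j\le p-1$, the only multiple of $p$ among the numerator factors is $np$ itself; hence for $j\ge 2$ the product is divisible by $p$, and reducing the surviving factors modulo $p$ gives $\binom{np+j-2}{j}\equiv -np/\big(j(j-1)\big)\pmod{p^{2}}$, whereas $\binom{np-2}{0}=1$ and $\binom{np-1}{1}=np-1$ cover the boundary values $j=0,1$.

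The key simplification is that each $j\ge 2$ term is already $\equiv 0\pmod{p}$, so multiplying it by $1-npH_{i}$ leaves $-np/(j(j-1))$ unchanged modulo $p^{2}$ and the harmonic correction drops out; only the boundary indices $j\in\{0,1\}$ retain their $npH_{i}$ part and produce the leading $\pm 1$. Feeding these evaluations into the displayed sum and grouping by the residue of $r$ modulo $3$ reduces each claim to a partial-fraction identity: $\frac{1}{3m(3m-1)}=\frac{1}{3m-1}-\frac{1}{3m}$ for $r=3k$, and the analogous splittings of $3m(3m+1)$ and $(3m+1)(3m+2)$ for $r=3k+1,3k+2$. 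These telescope the $-np/(j(j-1))$ contributions into $\sum 1/(3j+2)$ and $\sum 1/(3j+1)$, while the $-\tfrac13H_{k}$ generated by the $1/(3m)$ pieces combines with the $-H_{k}$ coming from the $j=0,1$ terms to yield the coefficient $-\tfrac23H_{k}$ appearing in \eqref{CC1} and \eqref{CC2}.

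The main obstacle is the bookkeeping at the two boundary indices. They carry both the integer part $\pm 1$ and genuine $O(np)$ pieces — for instance, at $r=3k+1$ the index $i=k$ gives $(1-npH_{k})(np-1)\equiv -1+np+npH_{k}\pmod{p^{2}}$ — and the extra $np$ here is exactly what promotes $\sum_{j=1}^{k}1/(3j+1)$ to $\sum_{j=0}^{k}1/(3j+1)$. One must also confirm that the reindexing $m=k-i$ recovers precisely the summation ranges $0\le j\le k$ or $0\le j\le k-1$ stated in \eqref{CC1}--\eqref{CC3}. I would settle the case $r=3k$ first, where the computation cleanly gives $\tfrac23H_{k}+\sum_{j=0}^{k-1}1/(3j+2)$, and then push the remaining two residue classes through the identical mechanism.
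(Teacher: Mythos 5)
Your argument is correct, but it is genuinely different from the paper's. The paper factors $(1+x+x^{2})^{n}=(x+e^{i\pi /3})^{n}(x+e^{-i\pi /3})^{n}$ and works from the cosine-weighted convolution
\begin{equation*}
\binom{n}{k}_{2}=\sum_{j=0}^{k}\binom{n}{j}\binom{n}{k-j}\cos \frac{(k-2j)\pi }{3},
\end{equation*}
applying the congruence (\ref{C55}) to \emph{both} binomial factors; it must then evaluate sums of $H_{j}$ weighted by cosine values, grouping indices by residue modulo $3$ and telescoping differences such as $2H_{3j}-H_{3j+1}-H_{3j+2}=-\frac{2}{3j+1}-\frac{1}{3j+2}$. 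You instead use the rational factorization $1+x+x^{2}=(1-x^{3})/(1-x)$, which convolves $\binom{np-1}{i}$ against the negative-binomial coefficients $\binom{np+j-2}{j}$, and your key lemma is the evaluation $\binom{np+j-2}{j}\equiv -np/\bigl(j(j-1)\bigr) \pmod{p^{2}}$ for $2\leq j\leq p-1$, which has no counterpart in the paper. Its effect is that every non-boundary summand is already divisible by $p$, so the harmonic corrections $npH_{i}$ survive only at $j\in \{0,1\}$ and everything else reduces to partial-fraction telescoping; in particular (\ref{CC3}) comes out with no harmonic term at all, automatically, since there every summand has $j\geq 2$. The trade-off: your route avoids complex roots and cosines entirely and concentrates the harmonic bookkeeping into at most one boundary term per case, while the paper's route reuses a single uniform identity across all three residue classes and pairs naturally with the displayed congruence (\ref{C55}), at the cost of heavier harmonic-sum manipulation. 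The details you flag as remaining (the reindexing $m=k-i$, and the sign of the $H_{k}$ coefficient, which is $-\tfrac{2}{3}$ in (\ref{CC1}) but $+\tfrac{2}{3}$ in (\ref{CC2})) check out routinely, so I consider your proof complete.
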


\begin{proof}
From the expansion 
\begin{equation*}
\left( 1+x+x^{2}\right) ^{n}=\left( x+e^{i\frac{\pi }{3}}\right) ^{n}\left(
x+e^{-i\frac{\pi }{3}}\right) ^{n}=\dsum \limits_{k\geq 0}\left( \dsum
\limits_{j=0}^{k}\binom{n}{j}\binom{n}{k-j}e^{\left( k-2j\right) i\frac{\pi 
}{3}}\right) x^{k}
\end{equation*}%
we deduce the identity%
\begin{equation}
\binom{n}{k}_{2}=\dsum \limits_{j=0}^{k}\binom{n}{j}\binom{n}{k-j}\cos \frac{%
\left( k-2j\right) \pi }{3}.  \label{ID5}
\end{equation}%
and by the congruence (\ref{C55}) and the identity (\ref{ID5}) we get%
\begin{eqnarray*}
\binom{np-1}{k}_{2} &=&\dsum \limits_{j=0}^{k}\binom{np-1}{j}\binom{np-1}{k-j%
}\cos \frac{\left( k-2j\right) \pi }{3} \\
&\equiv &\left( -1\right) ^{k}\dsum \limits_{j=0}^{k}\left( 1-np\left(
H_{j}+H_{k-j}\right) \right) \cos \frac{\left( k-2j\right) \pi }{3} \\
&=&\left( -1\right) ^{k}\dsum \limits_{j=0}^{k}\left( 1-2npH_{j}\right) \cos 
\frac{\left( k-2j\right) \pi }{3}\text{ }\left( \func{mod}p^{2}\right)
\end{eqnarray*}%
Then, for the congruence (\ref{CC1}), we have%
\begin{eqnarray*}
\binom{np-1}{3k}_{2} &\equiv &\dsum \limits_{j=0}^{3k}\cos \frac{2j\pi }{3}%
-2np\dsum \limits_{j=0}^{3k}H_{j}\cos \frac{2j\pi }{3} \\
&=&1-2np\dsum \limits_{j=0}^{3k}H_{j}\cos \frac{2j\pi }{3} \\
&=&1-np\left( 2\dsum \limits_{j=0}^{k}H_{3j}-\dsum
\limits_{j=0}^{k-1}H_{3j+1}-\dsum \limits_{j=0}^{k-1}H_{3j+2}\right) \\
&=&1-np\left( \dsum \limits_{j=0}^{k-1}\left(
2H_{3j}-H_{3j+1}-H_{3j+2}\right) +2H_{3k}\right) \\
&=&1-2npH_{3k}+np\left( 2\dsum \limits_{j=0}^{k-1}\frac{1}{3j+1}+\dsum
\limits_{j=0}^{k-1}\frac{1}{3j+2}\right) \\
&=&1-2np\left( \dsum \limits_{j=0}^{k-1}\frac{1}{3j+1}+\dsum%
\limits_{j=0}^{k-1}\frac{1}{3j+2}+\dsum \limits_{j=1}^{k}\frac{1}{3j}\right)
\\
&&+np\left( 2\dsum \limits_{j=0}^{k-1}\frac{1}{3j+1}+\dsum
\limits_{j=0}^{k-1}\frac{1}{3j+2}\right) \\
&=&1-np\left( \frac{2}{3}H_{k}+\dsum \limits_{j=0}^{k-1}\frac{1}{3j+2}%
\right) \text{ }\left( \func{mod}p^{2}\right) .
\end{eqnarray*}%
For the congruence (\ref{CC2}) we have%
\begin{eqnarray*}
\binom{np-1}{3k+1}_{2} &\equiv &-\dsum \limits_{j=0}^{3k+1}\cos \frac{\left(
2j-1\right) \pi }{3}+2np\dsum \limits_{j=0}^{3k+1}H_{j}\cos \frac{\left(
2j-1\right) \pi }{3} \\
&=&-1+2np\dsum \limits_{j=0}^{3k+1}H_{j}\cos \frac{\left( 2j-1\right) \pi }{3%
} \\
&=&-1+np\left( \dsum \limits_{j=0}^{k}\left(
H_{3j}+H_{3j+1}-2H_{3j+2}\right) +2H_{3k+2}\right) \\
&=&-1+np\left( -\dsum \limits_{j=0}^{k}\frac{1}{3j+1}-2\dsum
\limits_{j=0}^{k}\frac{1}{3j+2}+2H_{3k}+\frac{2}{3k+1}+\frac{2}{3k+2}\right)
\end{eqnarray*}%
\begin{eqnarray*}
&=&-1+2npH_{3k}+np\left( -\dsum \limits_{j=0}^{k-1}\frac{1}{3j+1}-2\dsum
\limits_{j=0}^{k-1}\frac{1}{3j+2}+\frac{1}{3k+1}\right) \\
&=&-1+2np\left( \dsum \limits_{j=0}^{k-1}\frac{1}{3j+1}+\dsum%
\limits_{j=0}^{k-1}\frac{1}{3j+2}+\dsum \limits_{j=1}^{k}\frac{1}{3j}\right)
\\
&&+np\left( -\dsum \limits_{j=0}^{k-1}\frac{1}{3j+1}-2\dsum
\limits_{j=0}^{k-1}\frac{1}{3j+2}+\frac{1}{3k+1}\right) \\
&=&-1+np\left( \frac{2}{3}H_{k}+\dsum \limits_{j=0}^{k}\frac{1}{3j+1}\right) 
\text{ }\left( \func{mod}p^{2}\right) .
\end{eqnarray*}%
For the congruence (\ref{CC3}) we have%
\begin{eqnarray*}
\binom{np-1}{3k+2}_{2} &\equiv &\dsum \limits_{j=0}^{3k+2}\cos \frac{\left(
2j-2\right) \pi }{3}-2np\dsum \limits_{j=0}^{3k+2}H_{j}\cos \frac{\left(
2j-2\right) \pi }{3} \\
&=&-2np\dsum \limits_{j=0}^{3k+2}H_{j}\cos \frac{\left( 2j-2\right) \pi }{3}
\\
&=&np\left( \dsum \limits_{j=0}^{k}\left( H_{3j}-2H_{3j+1}+H_{3j+2}\right)
\right) \\
&\equiv &np\left( -\dsum \limits_{j=0}^{k}\frac{1}{3j+1}+\dsum%
\limits_{j=0}^{k}\frac{1}{3j+2}\right) \text{ }\left( \func{mod}p^{2}\right)
.
\end{eqnarray*}
\end{proof}

\section{Proof of the main results}

\begin{proof}[Proof of Theorem \protect \ref{TH}]
For $p\equiv 1$ $\left( \func{mod}3\right) $ let $3k=p-1$ in the congruence (%
\ref{CC1}). Then, by the congruences (\ref{GL}) and (\ref{C1b}) we obtain%
\begin{equation*}
\binom{np-1}{p-1}_{2}\equiv 1-np\left( \frac{2}{3}H_{\left( p-1\right)
/3}+\dsum \limits_{k=0}^{\left( p-4\right) /3}\frac{1}{3j+2}\right) \equiv
1+npq_{3}\text{ }\left( \func{mod}p^{2}\right) .
\end{equation*}%
For $p\equiv 2$ $\left( \func{mod}3\right) $ let $3k+1=p-1$ in the
congruence (\ref{CC2}). Then, by the congruences (\ref{GL}) and (\ref{C2b})
we obtain%
\begin{equation*}
\binom{np-1}{p-1}_{2}\equiv -1+np\left( \frac{2}{3}H_{\left( p-2\right)
/3}+\dsum \limits_{j=0}^{\left( p-2\right) /3}\frac{1}{3j+1}\right) \equiv
-1-npq_{3}\text{ }\left( \func{mod}p^{2}\right) .
\end{equation*}%
For $p\equiv 1$ $\left( \func{mod}6\right) $ let $3k=\left( p-1\right) /2$
in the congruence (\ref{CC1}). Then, by the congruences (\ref{GL2}) and (\ref%
{H0}) we obtain%
\begin{equation*}
\binom{np-1}{3k}_{2}\equiv 1-np\left( \frac{2}{3}H_{k}+\dsum%
\limits_{j=0}^{k-1}\frac{1}{3j+2}\right) \equiv 1+np\left( 2q_{2}+\frac{1}{2}%
q_{3}\right) \text{ }\left( \func{mod}p^{2}\right) .
\end{equation*}%
For $p\equiv 5$ $\left( \func{mod}6\right) $ let $3k+2=\left( p-1\right) /2$
in the congruence (\ref{CC3}). Then, by the congruences (\ref{H3}) and (\ref%
{H2}) we obtain%
\begin{equation*}
\binom{np-1}{\left( p-1\right) /2}_{2}\equiv np\left( -\dsum
\limits_{j=0}^{\left( p-5\right) /6}\frac{1}{3j+1}+\dsum
\limits_{j=0}^{\left( p-5\right) /6}\frac{1}{3j+2}\right) \equiv -\frac{1}{2}%
npq_{3}\text{ }\left( \func{mod}p^{2}\right) .
\end{equation*}
\end{proof}

\begin{proof}[Proof of Theorem \protect \ref{C}]
By the Known identity \cite[Eq. 2.24]{BEL} 
\begin{equation}
\binom{np-1}{p-1}_{2}=\underset{k=\left( p-1\right) /2}{\overset{p-1}{\sum }}%
\binom{np-1}{k}\binom{k}{p-1-k}  \label{R1}
\end{equation}%
we have 
\begin{eqnarray*}
\binom{np-1}{p-1}_{2} &=&\dsum \limits_{k=\left( p-1\right) /2}^{p-1}\binom{%
np-1}{k}\binom{k}{p-1-k} \\
&\equiv &\dsum \limits_{k=\left( p-1\right) /2}^{p-1}\left( -1\right)
^{k}\left( 1-npH_{k}\right) \binom{k}{p-1-k} \\
&=&\dsum \limits_{k=0}^{\left( p-1\right) /2}\left( -1\right) ^{k}\binom{%
p-1-k}{k}\left( 1-npH_{p-1-k}\right) \text{ }\left( \func{mod}p^{2}\right) .
\end{eqnarray*}%
So, by the congruence 
\begin{equation*}
\binom{p-1-k}{k}=\dbinom{p-1}{2k}\dbinom{p-1}{k}^{-1}\binom{2k}{k}\equiv
\left( -1\right) ^{k}\binom{2k}{k}\text{ }\left( \func{mod}p\right) ,\text{ }%
k\in \left \{ 0,\ldots ,\frac{p-1}{2}\right \} ,
\end{equation*}%
the identity \cite[Cor. 2.8]{DAR}%
\begin{equation}
\dsum \limits_{k=0}^{\left[ n/2\right] }\left( -1\right) ^{k}\binom{n-k}{k}%
=\left \{ 
\begin{array}{l}
\text{ \  \ }0\text{ \  \  \  \  \  \  \  \  \ if \ }n\equiv 2\text{ }\left( \func{mod%
}3\right) , \\ 
\left( -1\right) ^{\left[ n/3\right] }\text{ \  \ otherwise}%
\end{array}%
\right.  \label{R2}
\end{equation}%
and the congruence (\ref{CONG 0}) we get%
\begin{equation*}
\binom{np-1}{p-1}_{2}\equiv \left( -1\right) ^{\left[ \left( p-1\right) /3%
\right] }-np\dsum \limits_{k=0}^{\left( p-1\right) /2}\binom{2k}{k}H_{k}%
\text{ }\left( \func{mod}p^{2}\right) .
\end{equation*}%
We note here that $\left( -1\right) ^{\left[ \left( p-1\right) /3\right] }=1$
if $p\equiv 1$ $\left( \func{mod}3\right) $ and $-1$ if $p\equiv 2$ $\left( 
\func{mod}3\right) .$ \newline
Hence, by combining this congruence with the congruence (\ref{2}), we obtain
the congruence (\ref{6}). Set $L=np-1,m=\left( p-1\right) /2$ in (\ref{R1}).
Then we have%
\begin{eqnarray*}
\binom{np-1}{\left( p-1\right) /2}_{2} &=&\dsum \limits_{k=0}^{\left(
p-1\right) /2}\binom{np-1}{k}\binom{k}{\left( p-1\right) /2-k} \\
&\equiv &\dsum \limits_{k=0}^{\left( p-1\right) /2}\left( -1\right)
^{k}\left( 1-npH_{k}\right) \binom{k}{\left( p-1\right) /2-k}
\end{eqnarray*}%
\begin{eqnarray*}
&=&\left( -1\right) ^{\left( p-1\right) /2}\dsum \limits_{k=0}^{\left(
p-1\right) /2}\left( -1\right) ^{k}\binom{\left( p-1\right) /2-k}{k} \\
&&-\left( -1\right) ^{\left( p-1\right) /2}np\dsum \limits_{k=0}^{\left(
p-1\right) /2}\left( -1\right) ^{k}\binom{\left( p-1\right) /2-k}{k}%
H_{\left( p-1\right) /2-k}\text{ }\left( \func{mod}p^{2}\right) .
\end{eqnarray*}%
Then, by the congruence (\ref{CONG 1}) the last congruence can be written as%
\begin{eqnarray*}
\binom{np-1}{\left( p-1\right) /2}_{2} &\equiv &\left( -1\right) ^{\frac{p-1%
}{2}}\left( 1+2npq_{2}\right) \dsum \limits_{k=0}^{\left[ \frac{p-1}{4}%
\right] }\left( -1\right) ^{k}\binom{\frac{p-1}{2}-k}{k} \\
&&-\left( -1\right) ^{\frac{p-1}{2}}np\dsum \limits_{k=1}^{\left[ \frac{p-1}{%
4}\right] }\left( -1\right) ^{k}\binom{\frac{p-1}{2}-k}{k}\left(
2H_{2k}-H_{k}\right) \text{ }\left( \func{mod}p^{2}\right) .
\end{eqnarray*}%
But for $k\in \left \{ 1,2,\ldots ,\left[ \left( p-1\right) /4\right]
\right
\} $ we have%
\begin{eqnarray*}
\left( -1\right) ^{k}\binom{\frac{p-1}{2}-k}{k} &=&\left( -1\right) ^{k}%
\frac{\left( p-\left( 2k+1\right) \right) \left( p-\left( 2k+3\right)
\right) \cdots \left( p-\left( 4k-1\right) \right) }{2^{k}k!} \\
&\equiv &\frac{\left( 2k+1\right) \left( 2k+3\right) \cdots \left(
4k-1\right) }{2^{k}k!} \\
&=&\frac{1}{4^{k}}\dbinom{4k}{2k}\text{ }\left( \func{mod}p\right) ,
\end{eqnarray*}%
hence%
\begin{eqnarray}
\binom{np-1}{\left( p-1\right) /2}_{2} &\equiv &\left( -1\right) ^{\frac{p-1%
}{2}}\left( 1+2npq_{2}\right) \dsum \limits_{k=0}^{\left[ \frac{p-1}{4}%
\right] }\left( -1\right) ^{k}\binom{\frac{p-1}{2}-k}{k}  \label{W} \\
&&-\left( -1\right) ^{\frac{p-1}{2}}np\dsum \limits_{k=1}^{\left[ \frac{p-1}{%
4}\right] }\frac{1}{4^{k}}\dbinom{4k}{2k}\left( 2H_{2k}-H_{k}\right) \text{ }%
\left( \func{mod}p^{2}\right) .  \notag
\end{eqnarray}%
Then, for $p\equiv 1$ $\left( \func{mod}6\right) ,$ the identity (\ref{R2})
shows that we have%
\begin{equation*}
\dsum \limits_{k=0}^{\left( p-1\right) /2}\left( -1\right) ^{k}\binom{\left(
p-1\right) /2-k}{k}=\left( -1\right) ^{\left[ \left( p-1\right) /6\right] },
\end{equation*}%
and since $\left( -1\right) ^{\left( p-1\right) /2+\left[ \left( p-1\right)
/6\right] }=1,$ the congruence (\ref{W}) becomes%
\begin{equation*}
\binom{np-1}{\left( p-1\right) /2}_{2}\equiv 1+2npq_{2}-\left( -1\right)
^{\left( p-1\right) /2}np\dsum \limits_{k=1}^{\left[ \left( p-1\right) /4%
\right] }\frac{1}{4^{k}}\dbinom{4k}{2k}\left( 2H_{2k}-H_{k}\right) \text{ }%
\left( \func{mod}p^{2}\right) ,
\end{equation*}%
and by combining this congruence with the congruence (\ref{4}), we obtain
the congruence (\ref{7}). Also, for $p\equiv 5$ $\left( \func{mod}6\right) ,$
the identity (\ref{R2}) shows that we have%
\begin{equation*}
\dsum \limits_{k=0}^{\left( p-1\right) /2}\left( -1\right) ^{k}\binom{\left(
p-1\right) /2-k}{k}=0,
\end{equation*}%
so the congruence (\ref{W}) becomes%
\begin{equation*}
\binom{np-1}{\left( p-1\right) /2}_{2}\equiv -\left( -1\right) ^{\left(
p-1\right) /2}np\dsum \limits_{k=1}^{\left[ \left( p-1\right) /4\right] }%
\frac{1}{4^{k}}\dbinom{4k}{2k}\left( 2H_{2k}-H_{k}\right) \text{ }\left( 
\func{mod}p^{2}\right) ,
\end{equation*}%
and by combining this congruence with the congruence (\ref{4}), we obtain
the congruence (\ref{7}).
\end{proof}

\begin{proof}[Proof of Proposition \protect \ref{PP}]
For $k\in \left \{ 0,1,\ldots ,\left[ p/3\right] -1\right \} ,$ from
Proposition \ref{P} we may state%
\begin{equation}
\binom{np-1}{3k}_{2}+\binom{np-1}{3k+1}_{2}+\binom{np-1}{3k+2}_{2}\equiv 
\frac{np}{3k+2}\text{ }\left( \func{mod}p^{2}\right) .  \label{a}
\end{equation}%
To prove the congruences (\ref{9}) let%
\begin{equation*}
\overset{p-1}{\underset{j=0}{\sum }}\binom{np-1}{j}_{2}=\overset{\left[
\left( p-1\right) /3\right] }{\underset{j=0}{\sum }}\binom{np-1}{3j}_{2}+%
\overset{\left[ \left( p-2\right) /3\right] }{\underset{j=0}{\sum }}\binom{%
np-1}{3j+1}_{2}+\overset{\left[ \left( p-3\right) /3\right] }{\underset{j=0}{%
\sum }}\binom{np-1}{3j+2}_{2}.
\end{equation*}%
For $p\equiv 1$ $\left( \func{mod}3\right) ,$ by the congruences (\ref{a}), (%
\ref{2}) and (\ref{C1b}), we get%
\begin{eqnarray*}
\overset{p-1}{\underset{j=0}{\sum }}\binom{np-1}{j}_{2} &=&\overset{\left(
p-1\right) /3}{\underset{j=0}{\sum }}\binom{np-1}{3j}_{2}+\overset{\left(
p-1\right) /3-1}{\underset{j=0}{\sum }}\binom{np-1}{3j+1}_{2}+\overset{%
\left( p-1\right) /3-1}{\underset{j=0}{\sum }}\binom{np-1}{3j+2}_{2} \\
&\equiv &\binom{np-1}{p-1}_{2}+\overset{\left( p-4\right) /3}{\underset{j=0}{%
\sum }}\left[ \binom{np-1}{3j}_{2}+\binom{np-1}{3j+1}_{2}\binom{np-1}{3j+2}%
_{2}\right] \\
&\equiv &\binom{np-1}{p-1}_{2}+np\overset{\left( p-4\right) /3}{\underset{j=0%
}{\sum }}\frac{1}{3j+2} \\
&\equiv &1+npq_{3}\text{ }\left( \func{mod}p^{2}\right) .
\end{eqnarray*}%
For $p\equiv 2$ $\left( \func{mod}3\right) ,$ by the congruences (\ref{a}), (%
\ref{CC1}), (\ref{GL}), (\ref{2}) and (\ref{C2c}), we get%
\begin{eqnarray*}
\overset{p-1}{\underset{j=0}{\sum }}\binom{np-1}{j}_{2} &=&\overset{\left(
p-2\right) /3}{\underset{j=0}{\sum }}\binom{np-1}{3j}_{2}+\overset{\left(
p-2\right) /3}{\underset{j=0}{\sum }}\binom{np-1}{3j+1}_{2}+\overset{\left(
p-2\right) /3-1}{\underset{j=0}{\sum }}\binom{np-1}{3j+2}_{2} \\
&\equiv &\binom{np-1}{p-2}_{2}+\binom{np-1}{p-1}_{2}+np\overset{\left(
p-5\right) /3}{\underset{j=0}{\sum }}\frac{1}{3j+2} \\
&\equiv &1-np\left( \frac{2}{3}H_{\left( p-2\right) /3}+\dsum
\limits_{j=0}^{\left( p-5\right) /3}\frac{1}{3j+2}\right) +\left(
-1-npq_{3}\right) +\left( \frac{1}{2}npq_{3}\right) \\
&\equiv &1-np\left( \frac{2}{3}\left( -\frac{3}{2}q_{3}\right) +\frac{1}{2}%
q_{3}\right) -1-\frac{1}{2}npq_{3} \\
&=&0\text{ }\left( \func{mod}p^{2}\right) .
\end{eqnarray*}%
To prove the congruences (\ref{10}) let%
\begin{equation*}
\overset{\left( p-1\right) /2}{\underset{j=0}{\sum }}\binom{np-1}{j}_{2}=%
\overset{\left[ \left( p-1\right) /6\right] }{\underset{j=0}{\sum }}\binom{%
np-1}{3j}_{2}+\overset{\left[ \left( p-3\right) /6\right] }{\underset{j=0}{%
\sum }}\binom{np-1}{3j+1}_{2}+\overset{\left[ \left( p-5\right) /6\right] }{%
\underset{j=0}{\sum }}\binom{np-1}{3j+2}_{2}.
\end{equation*}%
For $p\equiv 1$ $\left( \func{mod}6\right) ,$ by the congruences (\ref{a}), (%
\ref{2}) and (\ref{H1}), we obtain%
\begin{eqnarray*}
\overset{\left( p-1\right) /2}{\underset{k=0}{\sum }}\binom{np-1}{k}_{2} &=&%
\overset{\left( p-1\right) /6}{\underset{j=0}{\sum }}\binom{np-1}{3j}_{2}+%
\overset{\left( p-1\right) /6-1}{\underset{j=0}{\sum }}\binom{np-1}{3j+1}%
_{2}+\overset{\left( p-1\right) /6-1}{\underset{j=0}{\sum }}\binom{np-1}{3j+2%
}_{2} \\
&\equiv &\binom{np-1}{\left( p-1\right) /2}_{2}+\overset{\left( p-7\right) /6%
}{\underset{j=0}{\sum }}\left[ \binom{np-1}{3j}_{2}+\binom{np-1}{3j+1}_{2}+%
\binom{np-1}{3j+2}_{2}\right] \\
&\equiv &\binom{np-1}{\left( p-1\right) /2}_{2}+np\overset{\left( p-7\right)
/6}{\underset{j=0}{\sum }}\frac{1}{3j+2} \\
&\equiv &1+np\left( 2q_{2}+\frac{1}{2}q_{3}\right) +np\left( -\frac{2}{3}%
q_{2}+\frac{1}{2}q_{3}\right) \\
&=&1+np\left( \frac{4}{3}q_{2}+q_{3}\right) \text{ }\left( \func{mod}%
p^{2}\right) .
\end{eqnarray*}%
For $p\equiv 5$ $\left( \func{mod}6\right) ,$ by the congruences (\ref{a})
and (\ref{H1}), we obtain%
\begin{eqnarray*}
\overset{\left( p-1\right) /2}{\underset{k=0}{\sum }}\binom{np-1}{k}_{2} &=&%
\overset{\left( p-5\right) /6}{\underset{j=0}{\sum }}\binom{np-1}{3j}_{2}+%
\overset{\left( p-5\right) /6}{\underset{j=0}{\sum }}\binom{np-1}{3j+1}_{2}+%
\overset{\left( p-5\right) /6}{\underset{j=0}{\sum }}\binom{np-1}{3j+2}_{2}
\\
&\equiv &\overset{\left( p-5\right) /6}{\underset{j=0}{\sum }}\left[ \binom{%
np-1}{3j}_{2}+\binom{np-1}{3j+1}_{2}+\binom{np-1}{3j+2}_{2}\right] \\
&\equiv &np\overset{\left( p-5\right) /6}{\underset{j=0}{\sum }}\frac{1}{3j+2%
} \\
&\equiv &-\frac{2}{3}npq_{2}\text{ }\left( \func{mod}p^{2}\right) .
\end{eqnarray*}
\end{proof}

\begin{proof}[Proof of Corollary \protect \ref{CC}]
Corollary \ref{CC} follows easily from Proposition \ref{P}.
\end{proof}

\end{document}